\newtheorem{thm}{Theorem}[section]
\newtheorem{prop}[thm]{Proposition}
\newtheorem{cor}[thm]{Corollary}
\theoremstyle{remark}
\newtheorem{example}[thm]{Example}
\newtheorem{remark}[thm]{Remark}
\newtheorem{defin}{Definition}
\def\C{\mathbb{C}}
\def\Z{\mathbb{Z}}
\def\P{\mathbb{P}}
\def\R{\mathbb{R}}
\def\cJ{\mathcal{J}}
\def\S{\mathfrak{S}}
\def\SP{\mathcal{SP}}
\def\GZ{\mathcal{GZ}}
\def\lb{\lambda}%\l-balanced
\def\start{{\rm start}}
\def\Oc{\mathcal{O}}
\def\a{\alpha}
\def\om{\omega}
\def\l{\lambda}
\def\G{\Gamma}
\def\w_0{\underline{w_0}}
\def\id{\rm id}
\def\emptyset{\varnothing}
\title{Geometric mitosis}
\author{Valentina Kiritchenko}
\email{vkiritch@hse.ru}
\thanks{The author was supported by AG Laboratory NRU HSE,
MESRF grants ag. 11.G34.31.0023 and MK-983.2013.1, and RScF grant 14-21-00053.
}
\address{Laboratory of Algebraic Geometry and Faculty of Mathematics\\
National Research University Higher School of Economics\\
Vavilova St. 7, 112312 Moscow, Russia}
\address{Institute for Information Transmission Problems, Moscow, Russia}
\date{}
\keywords{Demazure operator, flag variety, Newton--Okounkov polytope, Schubert calculus}
\begin{document}
\maketitle
\begin{abstract}
We describe an elementary convex geometric algorithm for realizing Schubert cycles in complete flag varieties by unions of faces of polytopes.
For $GL_n$ and Gelfand--Zetlin polytopes, combinatorics of this algorithm coincides with that of  the  mitosis on pipe dreams introduced by Knutson and Miller.
For $Sp_4$ and a Newton--Okounkov polytope of the symplectic flag variety, the algorithm yields a new combinatorial rule that extends to $Sp_{2n}$.
\end{abstract}
\section{Introduction}
Positive presentations of Schubert cycles such as classical Schubert polynomials
play a key role in the Schubert calculus.
Ideas of toric geometry and theory of Newton (or moment) polytopes motivated search for positive presentations
with a more convex geometric flavor.
For instance, Schubert cycles on the complete flag variety for $GL_n$ were identified by various means with unions of faces of Gelfand--Zetlin polytopes \cite{KThesis,KoM,K10,KST}.
In the present paper, we develop an algorithm for representing Schubert cycles by faces of convex polytopes in the case of complete flag varieties for arbitrary reductive groups.

Let $G$ be a connected reductive group, $B\subset G$ a Borel subgroup, and $X=G/B$ the complete flag variety.
There are several partially overlapping classes of polytopes
that can be associated with ample line bundles on $X$ (see e.g. \cite{BZ,L,GK,Ka,K}).
For instance, string polytopes of Berenstein--Zelevinsky and Littelmann were recently exhibited in \cite{Ka} as Newton--Okounkov polytopes of flag varieties for a certain $B$-invariant valuation on $X$.
These polytopes usually form families $P_\l\subset\R^d$ (where $d:=\dim G/B$) parameterized by dominant weights $\l$ of $G$, and satisfy the property $|P_\l\cap\Z^d|=\dim V_\l$ where $V_\l$ is the irreducible $G$-module with the highest weight $\l$.
Moreover, there is a projection $p:\R^d\to\Lambda_G\otimes\R$ (where $\Lambda_G$ is the weight lattice of $G$) such that the Weyl character $\chi(V_\l)$ can be expressed as the multiplicity free sum over the lattice points in $P_\l$:
$$\chi(V_\l)=\sum_{x\in P_\l\cap\Z^d}e^{p(x)}.$$

Recall that ample line bundles $L_\l$ on $X$ are in bijective correspondence with irreducible representations $V_\l$ of $G$, 
and $H^0(X,L_\l)=V_\l^*$.
Let $X_w$ be a Schubert variety, i.e., the closure of a $B$-orbit in $G$, and  $\chi_w(\l):=\chi(H^0(X_w,L_\l|_{X_w})^*)$ the corresponding 
Demazure character. 
A natural way to identify $X_w$ with a union of faces $\S_w\subset P_\l$ is to choose $\S_w$ so that the following identity for holds for all $\l$:
$$\chi_w(\l)=\sum_{x\in \S_w\cap\Z^d}e^{p(x)}.$$
Demazure characters can be calculated inductively starting from the class of a point $X_{\id}=\{pt\}$ (that is, $\chi_{\id}(\l)=e^\l$), and applying Demazure operators $D_1$,\ldots,$D_r$ corresponding to the simple roots of $G$.
In this paper, we define geometric mitosis operations $M_1$,\ldots, $M_r$ on faces of $P_\l$ as convex geometric counterparts of Demazure operators $D_1$,\ldots, $D_r$, that is, they satisfy the identity
$$D_i\left(\sum_{x\in \S_w\cap\Z^d}e^{p(x)}\right)=\sum_{x\in M_i(\S_w)\cap\Z^d}e^{p(x)}.$$
whenever $l(s_iw)=l(w)+1$.

The definition of mitosis operations is elementary, and its main ingredient is mitosis on parallelepipeds introduced in \cite[Section 6]{KST}.
Mitosis on parallelepipeds can be viewed as a geometric realization of the mitosis  of Knutson--Miller \cite{KnM,M} restricted to two consecutive rows of pipe dreams.
We use mitosis on parallelepipeds as a building block for mitosis on more general polytopes $P_\l$ (called {\em parapolytopes}) that admit $r$ different fibrations by parallelepipeds.
Though the building blocks considered individually are combinatorially the same their arrangement depends significantly on the combinatorics of $P_\l$.
For instance, for Gelfand--Zetlin polytopes we get mitosis on usual pipe dreams, and for a polytope associated with the cone of adapted strings in type $C$ (see Section \ref{s.comb}) we get different combinatorial objects called {\em skew pipe dreams}.

This paper is organized as follows.
In Section \ref{s.para}, we recall mitosis on parallelepipeds and its relation to Demazure-type operators, and define geometric mitosis on parapolytopes.
In Section \ref{s.main}, we consider parapolytopes associated with reductive groups and prove Theorem \ref{t.Demazure} that relates Demazure operators with geometric mitosis.
In Corollary \ref{t.Schubert}, we give an algorithm for generating faces that represent a given Demazure character (or equivalently, a given Schubert cycle).
In Section \ref{s.Sp_4}, we apply the results of the preceding sections to $Sp_4$ and the symplectic DDO polytope $\SP_\l$ constructed in \cite{K}.
We prove that $\SP_\l$ can be realized as the Newton--Okounkov body of the flag variety $Sp_4/B$ and the line bundle $L_\l$ for 
a natural geometric valuation considered in \cite{An,Ka}.
Next, we outline how results of \cite{Ka2,KST} can be used to model the Schubert calculus on $Sp_4/B$ by intersecting faces of $\SP_\l$.
In Section \ref{s.comb}, we describe combinatorics of geometric mitosis, in particular, define mitosis on skew pipe dreams in type $C$ that generalize combinatorics of mitosis for $\SP_\l$.
We also formulate open questions.

I am grateful to Dave Anderson, Megumi Harada and Kiumars Kaveh for useful discussions.

\section{Mitosis on polytopes}\label{s.para}
\def\R{\mathbb{R}}
In this section, we define a convex-geometric operation ({\em geometric mitosis}) on faces of polytopes that models Demazure operators from representation theory.
The definition is elementary and reduces to the case of parallelepipeds, which we discuss first.
For special classes of polytopes associated with reductive groups, geometric mitosis has algebro-geometric and representation-theoretic meaning.
This will be discussed in the next section.

\subsection{Mitosis on parallelepipeds}
First, recall the {\em mitosis on parallelepipeds} (or {\em paramitosis}) from \cite[Section 6]{KST} using more geometric terms.
Let $\Pi:=\Pi(\mu,\nu)\subset\R^n$ be a parallelepiped given by inequalities
$\mu_i\le x_i\le \nu_i$ for $i=1$,\ldots, $n$.
In what follows, we will only consider parallelepipeds of this kind.
They will be called {\em coordinate parallelpipeds}.

\begin{defin} An edge of $\Pi$ is {\em essential} if it is given by equations
$$x_1=\mu_1,\ldots,x_{i-1}=\mu_{i-1}; \ \ x_{i+1}=\nu_{i+1},\ldots, x_{n}=\nu_n.$$
\end{defin}
Clearly, the number of essential edges is equal to $\dim\Pi$, and
the union of essential edges forms a broken line that connects
the vertices $(\mu_1,\ldots,\mu_n)$ and $(\nu_1,\ldots,\nu_n)$.
Denote the set of essential edges of $\Pi$ by $E(\Pi)$.

For every face $\Gamma\subset\Pi$, we now define a collection of faces $M(\Gamma)$.
Let $k$ be the minimal number such that $\Gamma\subseteq \{x_i=\mu_i\}$ for all $i>k$ (in particular, $\Gamma\nsubseteq \{x_k=\mu_k\}$) and $\nu_i\ne\mu_i$ for at least one $i>k$.
If no such $k$ exists then $M(\Gamma)=\emptyset$.
Under the isomorphism $\R^n\simeq\R^{k}\times\R^{n-k};$
$(x_1,\ldots,x_n)\mapsto (x_1,\ldots,x_k)\times(x_{k+1},\ldots,x_n)$ the parallelepiped $\Pi$
gets mapped to $\Pi'\times\Pi''$  where $\Pi'\subset\R^k$ and $\Pi''\subset\R^{n-k}$ are
coordinate parallelepipeds.
The face $\Gamma$ gets mapped to $\Gamma'\times v$ where $v=(\mu_{k+1},\ldots,\mu_{n})$
is a vertex of $\Pi''$ and $\Gamma'\subset\Pi'$ is
a face of $\Pi'$.

\begin{defin}\label{d.paramitosis}
The set $M(\Gamma)$ (called the {\em mitosis} of $\Gamma$)
consists of all faces $\Gamma'\times E$  such that $E$ is an essential
edge of $\Pi''$.
\end{defin}
In particular, $\dim\Delta=\dim \Gamma+1$ for any $\Delta\in M(\Gamma)$.
It is easy to check that $M^2(\Gamma)=\emptyset$ for any face $\Gamma$.
Here is the key example of mitosis.
\begin{example} If $\Gamma$ is the vertex $(\mu_{1},\ldots,\mu_{n})$, then $M(\Gamma)$ is the set of essential edges of $\Pi$.
\end{example}

This geometric version of mitosis is similar to the combinatorial mitosis of \cite{KnM}.
To see this represent every face of $\Pi(\mu,\nu)$ by a $2\times n$ table
$(a_{ij})_{i=1,2,\ 1\le j\le n}$ whose cells are either filled with $+$ or empty.
Namely, the face satisfies the equality $x_i=\mu_i$ or $x_i=\nu_i$ if and only if
$a_{1i}=+$ or $a_{2i}=+$, respectively (in particular, if $\mu_i=\nu_i$ then the $i$-th column
has two $+$).
On the level of tables, operation $M$ coincides the mitosis $M_i$ of \cite{KnM} on reduced
pipe dreams restricted to the rows $i$ and $i+1$ after reflecting our tables in a vertical line
(cf. \cite[Definition 6]{M}).

\begin{example}
If $\Pi(\mu,\nu)\subset\R^4$, where $\mu=(1,1,1,1)$ and $\nu=(2,2,1,2)$ (that is, $\mu_3=\nu_3$),
then the edge $\Gamma=\{\ x_2=\mu_2, \ x_4=\mu_4\}$ is represented by the table

$$\begin{array}{|c|c|c|c|}
\hline
\ \ &+ &+ &+\\
\hline
&&+&\\
\hline
\end{array}$$

The set $M(\Gamma)$ consists of two edges represented by the tables

$$\begin{array}{|c|c|c|c|}
\hline
\ \ &+ &+ &\ \ \\
\hline
&&+&\\
\hline
\end{array}\ \& \
\begin{array}{|c|c|c|c|}
\hline
\ \ &\ \  &+ & \\
\hline
&&+&+\\
\hline
\end{array}$$
\end{example}
The combinatorial notion of {\em chute moves} and {\em ladder moves} on
reduced RC-graphs or pipe dreams introduced in \cite{BB}
(cf. \cite[Definition 8]{M}) can also be extended to the geometric setting as follows.
For a partition $J=(0\le j_1<\ldots<j_{k-1}<j_k\le n)$, let $p_{J,i}$ denote the projection
$(x_1,\ldots,x_n)\mapsto (x_{j_i+1},\ldots,x_{j_{i+1}})$ for $i=1,\ldots,k-1$.
For $i=0$ and $i=k$ the projections $p_{J,i}$ are defined
by  $(x_1,\ldots,x_n)\mapsto (x_{1},\ldots,x_{j_1})$ and
$(x_1,\ldots,x_n)\mapsto (x_{{j_k}+1},\ldots,x_n)$, respectively.

\begin{defin}
A face $\Gamma\subset\Pi$ is called {\em reduced} if there is a partition $J(\Gamma)$ such that
$p_{J,1}(\Gamma)$ and $p_{J,k+1}(\Gamma)$ are the vertices $(\nu_1,\ldots,\nu_{j_1-1})$ and
$(\mu_{j_{k}+1},\ldots,\mu_n)$, respectively,
and $p_{J,i}(\Gamma)\in E(p_{J,i}(\Pi))$ for any $i=2$,\ldots, $k$.
\end{defin}
A partition $J(\Gamma)$ is unique if we ignore all indices $i\in\{1,\ldots,n\}$
such that $\mu_i=\nu_i$.
In the language of \cite{M}, the partition $J(\Gamma)$ corresponds to the decomposition of pipe dreams into {\em introns}.
The length of  $J(\Gamma)$ is equal to $\dim\G+2$.

\begin{defin}
Two reduced faces $\Gamma$ and $\Gamma'$ are said to be {\em $L$-equivalent} if
$J(\Gamma)=J(\Gamma')$.
Denote by $L(\Gamma)$ the set of all reduced faces equivalent to $\Gamma.$
\end{defin}

\begin{example}
Take $\Pi(\mu,\nu)\subset\R^5$, where $\mu=(1,1,1,1,1)$ and $\nu=(2,2,1,2,2)$.
The face $\Gamma=\{x_1=\mu_1; \ x_4=\nu_4 \}$ of dimension $2$ is reduced with respect to the
partition $(0,4,5)$.
The set $L(\Gamma)$ consists of three faces represented by the tables
$$
\begin{array}{|c|c|c|c|c|c|}
\hline
+&+ &+& &\ \ \\
\hline
 &  &+&\ \ &\\
\hline
\end{array} \quad
\begin{array}{|c|c|c|c|c|c|}
\hline
+&\ \ &+& &\ \ \\
\hline
 &    &+&+&\\
\hline
\end{array}\ (=\Gamma) \quad
\begin{array}{|c|c|c|c|c|c|}
\hline
\ \ &\ \ &+& &\ \ \\
\hline
 &  +    &+&+&\\
\hline
\end{array} \ .
$$
\end{example}

\begin{remark}\label{r.LM}
There is a bijection between $L$-equivalence classes and faces of the standard simplex (see \cite[Proposition 6.6]{KST}),
which yields a minimal realization of the simplex as a cubic complex.
Using this bijection it is not hard to check that for any face $\G\subset\Pi$
the mitosis applied to faces in $L(\G)$ produces a single $L$--equivalence class, i.e.,
$$ \bigcup_{F\in L(\Gamma)\atop{E\in M(F)}}E=L(\G')$$
for any $\G'\in M(\G)$ (see \cite[Remark 6.7]{KST} for more details).
\end{remark}

Definitions of $M(\Gamma)$ and $L(\Gamma)$ are motivated by the identity
\cite[Proposition 6.8]{KST} for a Demazure-type operator applied to an exponential sum over $\Gamma$.
We briefly recall this identity (for more details see \cite[Section 6]{KST}).
Let $s:\Z\to\Z$ be a reflection about
$$C:=\frac12\sum_{i=1}^n(\mu_i+\nu_i),$$
that is, $s(k)=2C-k$ for $k\in\Z$.
The reflection $s$ acts on the Laurent polynomials $\Z[t,t^{-1}]$ by $s(t^k)=t^{s(k)}$.
Define the operator $T_\Pi$ on $\Z[t,t^{-1}]$ by the formula
$$
T_\Pi f=\frac{f-t\cdot s(f)}{1-t}.
$$
It is not hard to see that, for every Laurent polynomial $f$, the function
$T_\Pi f$ is also a Laurent polynomial.
The operator $T_\Pi$ depends on the parallelepiped $\Pi=\Pi(\mu,\nu)$.

For a subset $A\subset\Pi(\mu,\nu)$, we define the Laurent polynomial
$\chi(A):=\sum_{x\in A\cap\Z^n} t^{\sigma(x)}\in\Z[t,t^{-1}]$ where
$\sigma(x):=\sum_{i=1}^n x_i$.

\begin{prop}{\cite[Proposition 6.8]{KST}}
\label{p.paramitosis}
Let $\Gamma$ be a reduced face of $\Pi$ such that $\Gamma$ contains the vertex $(\mu_1,\ldots,\mu_n)$.
Then $$T_\Pi\chi\left(\bigcup_{F\in L(\Gamma)}F\right)=
\chi\left(\bigcup_{F\in L(\Gamma)\atop{E\in M(F)}}E\right).$$
\end{prop}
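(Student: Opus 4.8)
The plan is to strip away all ``inessential'' directions using the product structure recorded by $J(\Gamma)$, reduce the identity to a single rank one computation in $\Z[t,t^{-1}]$, and then run that computation by hand.

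First I would use the partition $J(\Gamma)$ to split, after deleting the coordinates with $\mu_i=\nu_i$, the ambient space as $\R^n=\R^{d_0}\times\cdots\times\R^{d_k}$ so that $\Pi$ becomes a product $\Pi_0\times\cdots\times\Pi_k$ of coordinate parallelepipeds; then $\sigma(x)=\sum_j\sigma_j(x)$ (with $\sigma_j$ summing the coordinates of the $j$-th block) and $C=C_0+\cdots+C_k$, where $C_j$ is the centre of $\Pi_j$. For a reduced $\Gamma$ the block $\Pi_0$ carries the $\nu$-corner, each intermediate block carries an essential edge, and $\Pi_k$ carries the $\mu$-corner; the hypothesis that $\Gamma$ contains $(\mu_1,\dots,\mu_n)$ forces $\Pi_0$ to be degenerate (a point). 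Moreover, directly from the definition, $M$ acts inside the last factor only: writing $\Pi'':=\Pi_k$ with $\mu'',\nu''$ its corners, one has $\Gamma=\Gamma'\times\{\mu''\}$ and $M(\Gamma)=\{\Gamma'\times E:E\in E(\Pi'')\}$; I assume $M(\Gamma)\ne\emptyset$, i.e. $\dim\Pi''\ge1$, the remaining case being of no interest.

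Next I would rewrite both sides as products of Laurent polynomials. Since $L$-equivalence only remembers the partition, in each intermediate block the essential edge may be chosen freely and independently, so $\bigcup_{F\in L(\Gamma)}F$ is an honest Cartesian product over the blocks of $\Pi$: a fixed vertex of $\Pi_0$, the broken line $\bigcup E(\Pi_j)$ in each intermediate block, and the vertex $\mu''$ of $\Pi''$. As $\chi$ of a coordinate-split product of sets is the product of the $\chi$'s, this gives $\chi\bigl(\bigcup_{F\in L(\Gamma)}F\bigr)=g\cdot t^{\sigma(\mu'')}$, where $g$ is the product of the central monomial of the degenerate block $\Pi_0$ with the broken-line sums $\chi\bigl(\bigcup E(\Pi_j)\bigr)$ for $j=1,\dots,k-1$, each of which is symmetric about $C_j$; hence $g$ is fixed by the reflection of exponents about $C':=C_0+\cdots+C_{k-1}=C-C''$. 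By Remark~\ref{r.LM} (and \cite[Proposition~6.6]{KST}), $\bigcup_{F\in L(\Gamma)\atop{E\in M(F)}}E$ is a single $L$-equivalence class $L(\Gamma')$ for some reduced $\Gamma'\in M(\Gamma)$ again containing $(\mu_1,\dots,\mu_n)$, and the same factorization applied to $\Gamma'$ yields $\chi\bigl(\bigcup_{F\in L(\Gamma)\atop{E\in M(F)}}E\bigr)=g\cdot\chi\bigl(\bigcup E(\Pi'')\bigr)=g\cdot\sum_{m=\sigma(\mu'')}^{\sigma(\nu'')}t^m$.

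It then remains to prove the rank one identity $T_\Pi\bigl(g\cdot t^{\sigma(\mu'')}\bigr)=g\cdot\sum_{m=\sigma(\mu'')}^{\sigma(\nu'')}t^m$. Since $s$ reflects exponents about $C=C'+C''$ and $g$ is symmetric about $C'$, a one-line computation (using $2C''=\sigma(\mu'')+\sigma(\nu'')$) gives $s\bigl(g\cdot t^{\sigma(\mu'')}\bigr)=g\cdot t^{2C''-\sigma(\mu'')}=g\cdot t^{\sigma(\nu'')}$, whence $T_\Pi\bigl(g\cdot t^{\sigma(\mu'')}\bigr)=\frac{g\,t^{\sigma(\mu'')}-t\,g\,t^{\sigma(\nu'')}}{1-t}=g\cdot\frac{t^{\sigma(\mu'')}-t^{\sigma(\nu'')+1}}{1-t}=g\cdot\sum_{m=\sigma(\mu'')}^{\sigma(\nu'')}t^m$, using $\sigma(\mu'')\le\sigma(\nu'')$. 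This is exactly the right-hand side, and the case $\Gamma=(\mu_1,\dots,\mu_n)$ is recovered by taking $g=1$.

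I expect the main obstacle to be the factorization in the third paragraph: one must justify rigorously that the union of the faces in an $L$-equivalence class really is a Cartesian product of block-wise unions, so that its exponential sum factors, and that applying mitosis across all members of $L(\Gamma)$ glues — after all the overlaps — into the single class $L(\Gamma')$. These are purely combinatorial statements, proved via the identification of $L$-classes with faces of a simplex and its minimal realization as a cubic complex (Remark~\ref{r.LM} and \cite[Proposition~6.6, Remark~6.7]{KST}); once they are in place, the remaining steps are the elementary manipulations above, the only care being in bookkeeping the degenerate coordinates $\mu_i=\nu_i$ throughout.
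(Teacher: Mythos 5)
Your proof is correct and follows essentially the route the paper itself indicates: the paper gives no independent argument (it cites \cite[Proposition 6.8]{KST} and only remarks that the general case is deduced from the geometric-progression example), and your block decomposition along $J(\Gamma)$, the symmetry of the factor $g$ about $C'$, and the final rank-one computation are exactly that deduction carried out in detail. The only caveats are the ones you already flag yourself — bookkeeping of the degenerate coordinates $\mu_i=\nu_i$ and the excluded case $M(\Gamma)=\emptyset$ — so no new ideas are missing.
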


\begin{example} The simplest but crucial example is $\Gamma=\{(\mu_1,\ldots,\mu_n)\}$.
Then $L(\Gamma)=\{\Gamma\}$ and $M(\Gamma)=E(\Pi)$.
Hence, $\chi(\Gamma)=t^{\sigma(\mu)}$ and
$\chi(\bigcup_{E\in M(\Gamma)}E)=\sum_{i=\sigma(\mu)}^{\sigma(\nu)}t^i$.
The above proposition reduces to the geometric progression sum formula:
$$\frac{t^{\sigma(\mu)}-t\cdot t^{\sigma(\nu)}}{1-t}=
\sum_{i=\sigma(\mu)}^{\sigma(\nu)}t^i.$$
It is not hard to deduce Proposition \ref{p.paramitosis} from this partial case.
\end{example}
In what follows, we sometimes denote mitosis on parallelepipeds by $M_\Pi$ to indicate which parallelepiped $\Pi$ to consider.

\subsection{Mitosis on parapolytopes}
\def\Z{\mathbb{Z}}
\def\a{\alpha}
\def\l{\lambda}
We now use mitosis on parallelpipeds to define mitosis on a more general class of polytopes, namely,
on {\em parapolytopes}.
Consider the space with the direct sum decomposition
$$\R^d=\R^{d_1}\oplus\ldots\oplus\R^{d_r}$$
and choose coordinates $x=(x_1^1,\ldots,x_{d_1}^1;\ldots;x_1^r,\ldots,x_{d_r}^r)$ with respect
to this decomposition.

\begin{defin}
A convex polytope $P\subset{\mathbb R}^d$ is called a {\em parapolytope} if for any $i=1$,\ldots, $r$,
and any vector $c\in\R^d$ the intersection of $P$
with the parallel translate of $\R^{d_i}$ by $c$
is either empty or the parallel translate of a coordinate parallelepiped in $\R^{d_i}$, i.e.,
$$P\cap(c+\R^{d_i})=c+\Pi(\mu_c,\nu_c)$$
for $\mu_c$ and $\nu_c$ that depend on $c$.
\end{defin}

\begin{example} \label{e.GZ}
Consider the decomposition $\R^d=\R^{n-1}\oplus\R^{n-2}\oplus\ldots\oplus\R$ (that is,
$r=n-1$ and $d=\frac{n(n-1)}{2}$).
Let $\lambda=(\lambda_1,\ldots,\lambda_n)$ be a non-decreasing collection of real numbers.
For every $\l$, define the {\em Gelfand--Zetlin polytope} $\GZ_\l$ by the inequalities
$$
\begin{array}{cccccccccc}
\l_1&       & \l_2    &         &\l_3          &    &\ldots    & &       &\l_n   \\
    &x^1_1&         &x^1_2  &         & \ldots   &       &  &x^1_{n-1}&       \\
    &       &x^2_1 &       &  \ldots &   &        &x^2_{n-2}&         &       \\
    &       &       &  \ddots   & &  \ddots   &      &         &         &       \\
    &       &       &  &x^{n-2}_1&     &  x^{n-2}_2 &        &         &       \\
    &       &         &    &     &x^{n-1}_1&   &              &         &       \\
\end{array}
$$
where the notation
$$
 \begin{array}{ccc}
  a &  &b \\
   & c &
 \end{array}
 $$
means $a\le c\le b$.
It is easy to check that $\GZ_\l$ is a parapolytope.
\end{example}

If $P\subset\R^d$ is a parapolytope then we can define
$r$ different mitosis operations $M_1$,\ldots, $M_r$ on faces of $P$.
These operations come from mitosis on parallelepipeds $P_\l\cap(c+\R^{d_1})$,\ldots, $P_\l\cap(c+\R^{d_r})$, respectively.
For a polytope $\Gamma\subset \R^d$, denote by $\Gamma^\circ$ the relative interior of $\Gamma$,
i.e., $\Gamma^\circ$ consists of all points of $\Gamma$ that do not lie in faces of smaller dimension.

\begin{defin}\label{d.mitosis} Let $i=1$,\ldots, $r$, and $\Gamma$ a face of $P$.
Choose $c\in \Gamma^\circ$.
Put $\Pi_c:=P\cap(c+\R^{d_i})$ and $\Gamma_c:=\Gamma\cap(c+\R^{d_i})$.
The set $M_i(\Gamma)$ consists of all faces $\Delta\subset P$ such that $\Delta^\circ$ contains $F^\circ$ for some $F\in M_{\Pi_c}(\Gamma_c)$.
Here  $M_{\Pi_c}$ is the mitosis on the parallelepiped $\Pi_c$ (see Definition \ref{d.paramitosis}).
\end{defin}
It is easy to check that $M_i(\Gamma)$ does not depend on the choice of $c\in\Gamma^\circ$.
Similarly, we can define the {\em L-class} $L_i(\Gamma)$ if $\Gamma_c$ is reduced.
\begin{defin}\label{d.ladder} Let $i=1$,\ldots, $r$, and $\Gamma$ a face of $P$.
We say that $\Gamma$ is {\em $L_i$-reduced} if $\Gamma_c:=\Gamma\cap(c+\R^{d_i})$ is reduced for some
$c\in \Gamma^\circ$. 
\end{defin}

\begin{example} Consider Example \ref{e.GZ} for $n=3$.
There will be two mitosis operations $M_1$, $M_2$.
Let us apply compositions of $M_1$ and $M_2$ to the vertex $a_\l=\{x^1_1=x^2_1=\l_1;\ x^1_2=\l_2\}$ (i.e., the vertex with the lowest sum of coordinates).
The resulting faces will all contain $a_\lambda$, and hence, can be encoded by the following table:
$$
\begin{array}{|c|c|}
\hline
+\Leftrightarrow x^1_1=\l_1&+ \Leftrightarrow x^1_2=\l_2\\
\hline
 & +\Leftrightarrow x^2_1=\l_1\\
\hline
\end{array}\ , \mbox{ e.g. the face } \{x^1_1=\l_1\} \mbox{ is encoded by }
\begin{array}{|c|c|}
\hline
+& \ \ \\
\hline
 &\\
\hline
\end{array}\ .
$$
Applying Definition \ref{d.mitosis} repeatedly, we get
$$
a_\l=\begin{array}{|c|c|}
\hline
+ &+ \\
\hline
 & +\\
\hline
\end{array}
\stackrel{M_1}{\longrightarrow}
\begin{array}{|c|c|}
\hline
 \ \ &+ \\
\hline
 & +\\
\hline
\end{array}
\stackrel{M_2}{\longrightarrow}
\begin{array}{|c|c|}
\hline
\ \  &+ \\
\hline
 & \\
\hline
\end{array}
\stackrel{M_1}{\longrightarrow}
\begin{array}{|c|c|}
\hline
& \ \ \\
\hline
\ \ & \\
\hline
\end{array}=\GZ_\l
$$
$$
a_\l
\stackrel{M_2}{\longrightarrow}
\begin{array}{|c|c|}
\hline
+ &+ \\
\hline
 & \\
\hline
\end{array}
\stackrel{M_1}{\longrightarrow}
\left\{\begin{array}{|c|c|}
\hline
+  &\ \ \\
\hline
 & \\
\hline
\end{array}\ , \
\begin{array}{|c|c|}
\hline
\ \ &  \\
\hline
& +\\
\hline
\end{array}\right\}
\stackrel{M_2}{\longrightarrow}
\GZ_\l
$$
From a combinatorial viewpoint, this is exactly mitosis on pipe dreams of \cite{KnM} (after reflecting our diagrams
in a vertical line).
For arbitrary $n$, geometric mitosis on $\GZ_\l$ also yields combinatorial mitosis on pipe dreams
(see \cite[Section 6.3]{KST}).
\end{example}
We now consider an example where geometric mitosis produces a new combinatorial rule.
\begin{example} \label{e.Sp_4}
Let $\l=(\l_1,\l_2)$, where $\l_1$ and $\l_2$ are positive real numbers.
In \cite[Example 3.4]{K}, convex-geometric divided difference operators were used to construct the following symplectic DDO polytope $\SP_\lambda$ in $\R^4$:
$$0\le y_1\le \l_1, \quad y_2\le y_1+\l_2, \quad y_3\le 2y_2,$$
$$y_3\le y_2+\l_2,\quad 0\le y_4\le \l_2, \quad y_4\le \frac{y_3}{2}.$$
As can be readily seen from the inequalities, it is a parapolytope with respect to the decomposition $\R^4=\R^2\oplus\R^2$
given by $x_1^1=y_1$, $x^2_1=y_2$, $x^1_2=y_3$, $x^2_2=y_4$.
Hence, there are two mitosis operations $M_1$ and $M_2$.
Again, let us apply compositions of $M_1$ and $M_2$ to the lowest (with respect to the sum of coordinates) vertex $0\in \SP_\l$.
The faces of $\SP_\l$ that contain $0$ can be encoded by the following diagram:
$$\begin{array}{|c|}
\hline
+ \Longleftrightarrow 0=y_1\\
\hline
\end{array}\begin{array}{|c|}
\hline
+ \Longleftrightarrow 0=y_4 \\
\hline
+ \Longleftrightarrow y_4=\frac{y_3}{2}\\
\hline
+ \Longleftrightarrow y_3=2y_2\\
\hline
\end{array}\ , \mbox{ e.g. } \{y_1=0, \ y_3=2y_2\} \mbox{ is encoded by }
\begin{array}{|c|}
\hline
+ \\
\hline
\end{array}\begin{array}{|c|}
\hline
\\
\hline
\\
\hline
+\\
\hline
\end{array}.
$$
By Definition \ref{d.mitosis} we get
$$0=
\begin{array}{|c|}
\hline
+\\
\hline
\end{array}\begin{array}{|c|}
\hline
+\\
\hline
+ \\
\hline
+ \\
\hline
\end{array}
\stackrel{M_1}{\longrightarrow}
\begin{array}{|c|}
\hline
\ \ \\
\hline
\end{array}\begin{array}{|c|}
\hline
+\\
\hline
+ \\
\hline
+ \\
\hline
\end{array}
\stackrel{M_2}{\longrightarrow}
\begin{array}{|c|}
\hline
\ \ \\
\hline
\end{array}\begin{array}{|c|}
\hline
+\\
\hline
+ \\
\hline
 \\
\hline
\end{array}
\stackrel{M_1}{\longrightarrow}
\begin{array}{|c|}
\hline
\ \ \\
\hline
\end{array}\begin{array}{|c|}
\hline
+\\
\hline
 \\
\hline
 \\
\hline
\end{array}
\stackrel{M_2}{\longrightarrow}
\begin{array}{|c|}
\hline
\ \ \\
\hline
\end{array}\begin{array}{|c|}
\hline
\\
\hline
\ \  \\
\hline
\\
\hline
\end{array} =\SP_\l$$
$$0
\stackrel{M_2}{\longrightarrow}
\begin{array}{|c|}
\hline
+\\
\hline
\end{array}\begin{array}{|c|}
\hline
+\\
\hline
+ \\
\hline
 \\
\hline
\end{array}
\stackrel{M_1}{\longrightarrow}
\left\{\begin{array}{|c|}
\hline
\ \ \\
\hline
\end{array}\begin{array}{|c|}
\hline
+\\
\hline
 \\
\hline
+ \\
\hline
\end{array}\ , \
\begin{array}{|c|}
\hline
+ \\
\hline
\end{array}\begin{array}{|c|}
\hline
+\\
\hline
\\
\hline
 \\
\hline
\end{array}
\right\}
\stackrel{M_2}{\longrightarrow}
\left\{\begin{array}{|c|}
\hline
\ \ \\
\hline
\end{array}\begin{array}{|c|}
\hline
\\
\hline
+ \\
\hline
 \\
\hline
\end{array} \ , \
\begin{array}{|c|}
\hline
\ \ \\
\hline
\end{array}\begin{array}{|c|}
\hline
\\
\hline
 \\
\hline
+ \\
\hline
\end{array}
\ , \
\begin{array}{|c|}
\hline
+\\
\hline
\end{array}\begin{array}{|c|}
\hline
\\
\hline
\ \  \\
\hline
\\
\hline
\end{array}\right\}
\stackrel{M_1}{\longrightarrow}
\SP_\l
$$
\end{example}
The combinatorics of the last example can be extended to the decomposition $\R^{r^2}=\R^{r}\oplus\R^{2r-2}\oplus\R^{2r-4}\oplus\ldots\oplus\R^2$ (see Section \ref{s.comb}).

\section{Geometric mitosis and Demazure operators}\label{s.main}
In this section, we discuss the relation between geometric mitosis, Demazure operators and Schubert calculus.
We introduce a  special class of parapolytopes associated with reductive groups.
In particular, Gelfand--Zetlin polytopes and, more generally, polytopes constructed in \cite[Section 3]{K} via convex-geometric divided difference operators belong to this class.

Let $G$ be a connected reductive group of semisimple rank $r$.
Let $\alpha_1$,\ldots, $\alpha_r$ denote simple roots of $G$, and
$s_1$,\ldots, $s_r$ the corresponding simple reflections.
Fix a reduced decomposition $\w_0=s_{i_1}s_{i_2}\cdots s_{i_d}$ of the longest element $w_0$
of the Weyl group of $G$.
Let $d_i$ be the number of $s_{i_j}$ in this decomposition such that $i_j=i$.
Consider the space
$$\R^d=\R^{d_1}\oplus\ldots\oplus\R^{d_r}.$$
As before, we choose coordinates $x=(x_1^1,\ldots,x_{d_1}^1;\ldots;x_1^r,\ldots,x_{d_r}^r)$ with respect to this decomposition.
We will also use an alternative labeling of coordinates $(y_1,\ldots,y_d)$ where $$y_{d-j+1}=x^{i_j}_{p_j}$$ for $p_j:= \{k\ge j \ |\ s_{i_k}=s_{i_j}\}$.
\begin{example}\label{e.GZ1}
(a) Let $G=GL_n$ and $\w_0=(s_1)(s_2s_1)(s_3s_2s_1)\ldots(s_{n-1}\ldots s_1)$.
Then $r=n-1$, $d=\frac{n(n-1)}{2}$ and $\R^d=\R^{n-1}\oplus\R^{n-2}\oplus\ldots\oplus\R$.
The labelings of coordinates are related as follows:
$$(y_1,y_2,\ldots,y_d)=(x^1_1,x^2_1,\ldots,x^{n-1}_1;x^1_2, x^2_2,\ldots, x^{n-2}_2;\ldots;x_1^{n-1}).$$

(b) Let $G=Sp_4$ and $\w_0=s_2s_1s_2s_1$ (the symplectic DDO polytope $\SP_\l$ was constructed in \cite[Example 3.4]{K} using this decomposition).
Then $r=2$, $d=4$, $\R^4=\R^2\oplus\R^2$, and
$$(y_1,y_2,y_3,y_4)=(x^1_1,x^2_1,x^1_2,x^2_2)$$
exactly as in Example \ref{e.Sp_4}.
\end{example}

Put $\sigma_i(x)=\sum_{j=1}^{d_i} x^i_j$.
Let $\Lambda_G$ denote the weight lattice of $G$.
Define the projection $p$ of $\R^d$ to $\Lambda_G\otimes\R$ by the
formula $p(x)=\sigma_1(x)\alpha_1+\ldots+\sigma_r(x)\alpha_r$.
In what follows, we always assume that $P$ lies in the positive octant and contains the origin, that is, the origin is the vertex of $P$ with the minimal sum of coordinates.
Let  $\lambda$ be a dominant weight of $G$.
In what follows, we identify $\R^{d}/\R^{d_i}$ with $\R^{d_1}\oplus\ldots\oplus\widehat{\R^{d_i}}\oplus\ldots\oplus\R^{d_r}$.
\begin{defin}
Let $i\in\{1,\ldots,r\}$.
A parapolytope $P\subset \R^d$ is called {\em  $(\lambda, i)$-balanced}
if for any $c\in\R^d/\R^{d_i}$ we have
$$\sigma_i(\mu_c)+\sigma_i(\nu_c)=(-w_0\l-p(c),\a_i),$$
where $(\cdot,\a_i)$ is a coroot, i.e., is defined by the identity $s_i(\chi)=\chi-(\chi,\a_i)\a_i$ for all $\chi$ in the weight lattice.
\end{defin}

\begin{example}\label{e.GZ_Sp}
We continue Example \ref{e.GZ1}.

(a) Let $a_\l:=(\l_1,\ldots,\l_{n-1}$; $\l_1,\ldots,\l_{n-2};\ldots; \l_1)$ be the lowest vertex of the Gelfand--Zetlin polytope $\GZ_\l$ (see Example \ref{e.GZ}).
Let $\om_1$,\ldots, $\om_{n-1}$ denote the fundamental weights of $SL_n$.
It is easy to check that the parallel translate $\GZ_\l-a_\l$ of the Gelfand--Zetlin polytope
is $(\l,i)$-balanced for all $i\in\{1,\ldots,n-1\}$ and $\l=(\l_2-\l_1)\om_1+\ldots+(\l_n-\l_{n-1})\om_1$.

(b) Let $\l$ be a strictly dominant weight of $Sp_4$.
Let $\a_1$ denote the shorter root, and $\a_2$ the longer one.
Put $\l_i=(\l,\a_i)$ for $i=1,2$.
It is easy to check that the symplectic DDO polytope $\SP_\l$ from Example \ref{e.Sp_4} is $(\l,i)$-balanced for $i=1,2$.
\end{example}

\begin{defin} A parapolytope $P\subset\R^d$ is called $\lambda$-{\em balanced} if
it is $(\l,i)$-balanced for any $i\in\{1,\ldots,r\}$
\end{defin}
In particular, the polytopes considered in Examples \ref{e.GZ_Sp} are $\lambda$-balanced.
For certain $\w_0$, one can construct $\l$-balanced polytopes using an elementary convex-geometric algorithm that mimics divided difference operators (see \cite[Theorem 3.6]{K} for more details), e.g. Gelfand--Zetlin polytopes and the symplectic DDO polytope $\SP_\l$ can be constructed this way.
Another source of $\l$-balanced polytopes might be provided by Newton--Okounkov polytopes of flag varieties for certain valuations.
For instance,  $\SP_\l$ can also be realized as the Newton--Okounkov polytope of the flag variety of $Sp_4$ 
for a geometric valuation associated with $\w_0$ (see Section \ref{s.Sp_4}).

\begin{remark} \label{r.DDO}
The symplectic DDO polytope $\SP_\l$ has 11 vertices, hence, it is not combinatorially equivalent to string polytopes for $Sp_4$ and $\w_0=s_1s_2s_1s_2$ or $s_2s_1s_2s_1$ defined in \cite{L} (the latter have 12 vertices).
\end{remark}

If $P_\l$ is a $\lb$-balanced parapolytope, then geometric mitosis on $P_\l$ is compatible
with the action of Demazure operators $D_{\a_1}$,\ldots, $D_{\a_r}$ on the group algebra $\Z[\Lambda_G]$.
Let $\a$ be a root of $G$.
Recall that $D_\a$ acts on $\Z[\Lambda_G]$ as follows:
$$D_\a e^{\mu}=\frac{e^{\mu}-e^{\a}e^{s_i(\mu)}}{1-e^{\a}}.$$
For a subset $A\subset P_\l$, denote by $A_c$ the intersection $A\cap(c+\R^{d_i})$.
Let $\pi_i:\R^d\to\R^{d_1}\oplus\ldots\oplus\widehat{\R^{d_i}}\oplus\ldots\oplus\R^{d_r}$ be the projection that forgets coordinates $(x^i_1,\ldots,x^i_{d_i})$.
\begin{thm} \label{t.Demazure}
Let $i\in\{1,\ldots, r\}$, and $S$ a collection of $L_i$-reduced faces of a $\lb$-balanced parapolytope $P_\l$ that satisfy the following conditions.

(1) Every $F\in S$ contains the vertex $0\in P_\l$.

(2) If $F\in S$, then $L_i(F)\subset S$.

(3) For every $F\in S$ with empty $M_i(F)$ there exists $F'\in S$ with nonempty $M_i(F')$ such that $F_c\subset \G_c$ for some $\G\in M_i(F')$ and some $c\in F^\circ$.

(4) The sets $\S:=\bigcup_{F\in S}F$ and $M_i(\S):=\bigcup_{F\in S}\bigcup_{E\in M_i(F)}E$ have the same image under $\pi_i$, i.e., $\pi_i(\S)=\pi_i(M_i(\S))$.

Then we have
$$D_{\a_i}\left(e^{w_0\l}\sum_{x\in\S\cap\Z^d}e^{p(x)}\right)=
e^{w_0\l}\sum_{M_i(\S)\cap\Z^d}e^{p(x)}.$$

\end{thm}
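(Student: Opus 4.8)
The plan is to reduce the $d$-dimensional statement to the one-dimensional fibrewise identity already established in Proposition \ref{p.paramitosis}, by slicing $P_\l$ into the parallelepipeds $\Pi_c=P_\l\cap(c+\R^{d_i})$ and summing over $c$. First I would fix $i$ and observe that, since $p(x)=\sigma_1(x)\a_1+\dots+\sigma_r(x)\a_r$, the exponent $p(x)$ splits as $p(x)=\sigma_i(x)\a_i+p(\pi_i(x))$, where the second term depends only on the fibre coordinate $c=\pi_i(x)$. Hence for any $B$-invariant subset $A$ refined along the $i$-th fibration,
$$e^{w_0\l}\sum_{x\in A\cap\Z^d}e^{p(x)}=\sum_{c}e^{w_0\l+p(c)}\Bigl(\sum_{x\in A_c\cap\Z^{d_i}}e^{\sigma_i(x)\a_i}\Bigr),$$
the outer sum being over the (finitely many) lattice fibres $c\in\pi_i(\Z^d)$ that meet $A$. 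Writing $t=e^{\a_i}$, the inner sum is exactly $\chi(A_c)$ evaluated on the parallelepiped $\Pi_c$, with $\sigma$ there playing the role of $\sigma_i$ here.

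The next step is to match the Demazure operator $D_{\a_i}$ with the fibrewise operators $T_{\Pi_c}$. Applying $D_{\a_i}$ to the displayed sum and using $D_{\a_i}(e^{\mu}f)=e^{\mu}D_{\a_i}(f)$ is not quite available since $D_{\a_i}$ is only $s_i$-linear; instead I would compute $D_{\a_i}(e^{w_0\l}e^{p(c)}t^k)$ directly from the definition $D_{\a_i}e^{\nu}=(e^{\nu}-t\,e^{s_i\nu})/(1-t)$, with $\nu=w_0\l+p(c)+k\a_i$. Here the $(\l,i)$-balanced hypothesis enters decisively: it forces $\sigma_i(\mu_c)+\sigma_i(\nu_c)=(-w_0\l-p(c),\a_i)$, which is exactly the condition that makes the reflection $s$ on $\Z$ appearing in $T_{\Pi_c}$ — reflection about $C_c=\tfrac12(\sigma_i(\mu_c)+\sigma_i(\nu_c))$ — conjugate, via the identification $k\mapsto w_0\l+p(c)+k\a_i$, to the action of $s_i$ on weights. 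Concretely, $s_i(w_0\l+p(c)+k\a_i)=w_0\l+p(c)+s(k)\a_i$ precisely when $(w_0\l+p(c),\a_i)=-2C_c$, i.e. the balancedness relation. Granting this, $D_{\a_i}$ restricted to the span of $\{e^{w_0\l+p(c)}t^k\}_k$ is intertwined with $T_{\Pi_c}$, so
$$D_{\a_i}\Bigl(e^{w_0\l+p(c)}\chi(\S_c)\Bigr)=e^{w_0\l+p(c)}\,T_{\Pi_c}\chi(\S_c).$$

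Then I would invoke Proposition \ref{p.paramitosis} on each fibre. Condition (2) says $S$ is a union of $L_i$-classes, so each slice $\S_c=\bigcup_{F\in S}F_c$ is a union of $L$-classes of reduced faces of $\Pi_c$ containing its lowest vertex (here condition (1), together with the normalization that $0$ is the lowest vertex, guarantees each $F_c$ contains the vertex $(\mu_1,\dots,\mu_{d_i})$ of $\Pi_c$). Proposition \ref{p.paramitosis} then gives $T_{\Pi_c}\chi(\S_c)=\chi\bigl(\bigcup_{F,E}E\bigr)$, and the right side is $(M_i(\S))_c$ by Definition \ref{d.mitosis} — the faces $\Delta\in M_i(F)$ are by construction those whose slice contains $F^\circ$'s sibling essential edges. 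Summing over $c$ recovers the claimed identity, provided the index set of fibres is the same on both sides; that is exactly what condition (4), $\pi_i(\S)=\pi_i(M_i(\S))$, supplies, while condition (3) ensures no fibre of $\S$ with trivial mitosis is silently dropped (its slice is absorbed into $\Gamma_c$ for some $\Gamma\in M_i(F')$, so it already appears in $M_i(\S)_c$ and contributes nothing new under $T_{\Pi_c}$ of the constant-free part).

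The main obstacle I anticipate is the careful bookkeeping in the last paragraph: on a given fibre $c$, $\S_c$ need not literally be a single $L$-class containing the lowest vertex — it may be a union of several, plus possibly some $F_c$ that are reduced but whose mitosis is empty, or even $F_c$ that degenerate (when $\mu_i=\nu_i$ in several coordinates) — so one must check that Proposition \ref{p.paramitosis} applies termwise and that the contributions assemble correctly. This is where conditions (3) and (4) do the real work, and verifying that they exactly compensate for the failure of the naive fibrewise picture is the delicate point; the balancedness-to-reflection translation in the middle step, by contrast, is a short computation once set up correctly.
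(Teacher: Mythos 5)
Your overall route is the same as the paper's: slice along the $i$-th fibration, use $p(x)=p(\pi_i(x))+\sigma_i(x)\a_i$ to rewrite the exponential sum as a double sum over fibres $c$ and fibre coordinates, intertwine $D_{\a_i}$ with the fibrewise operators $T_{\Pi_c}$ via the $(\l,i)$-balanced condition (your explicit check that $s_i(w_0\l+p(c)+k\a_i)=w_0\l+p(c)+s(k)\a_i$ is exactly the content of the paper's one-line appeal to balancedness, and it is correct), and reassemble over fibres using hypothesis (4).

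The gap is in the fibrewise step. You try to extract the identity $T_{\Pi_c}\chi(\S_c)=\chi\bigl(M_i(\S)_c\bigr)$ from Proposition \ref{p.paramitosis}, which applies only to a single $L$-class of one reduced face containing the lowest vertex, and you yourself flag --- without resolving --- the two obstructions: $\S_c$ is in general a union of several $L$-classes which may overlap (so $\chi$ of the union is not the sum of the $\chi$'s of the classes, and the same problem occurs for the overlapping output faces), and $\S_c$ may contain slices $F_c$ with empty mitosis, whose lattice points sit on the input side and are not covered by Proposition \ref{p.paramitosis} at all. Your sentence that such slices ``contribute nothing new under $T_{\Pi_c}$ of the constant-free part'' is an assertion, not an argument: $T_{\Pi_c}$ does not in general annihilate the contribution of an arbitrary subface contained in the output union, and it is precisely here that hypotheses (1)--(3) must be put to work. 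The paper closes this step by invoking \cite[Proposition 6.10]{KST}, a statement about unions of reduced faces (allowing empty mitosis and overlaps) that is strictly stronger than the single-class Proposition \ref{p.paramitosis} restated in the paper; without proving such a statement, or citing it, your argument is incomplete at its central point, even though every other step matches the paper's proof.
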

\begin{proof}
Every $x\in P_\l$ can be written uniquely as $\pi_i(x)+z$ where $z\in\Pi_c$. 
Since $p(x)=p(\pi_i(x))+\sigma_i(z)\a_i$ we get
$$\sum_{x\in\S\cap\Z^d}e^{p(x)}=\sum_{c\in \pi_i(\S)\cap\Z^{d-d_i}}e^{p(c)}\sum_{z\in\S_c\cap\Z^{d_i}}t^{\sigma_i(z)},$$
where $t:=e^{\a_i}$.
Note that $D_{\a_i}(e^{p(c)+w_0\l}t^{\sigma_i(z)})=e^{p(c)+w_0\l}T_{\Pi_c}(t^{\sigma_i(z)})$ because $P_\l$ is $\lb$-balanced.
Hence,
$$D_{\a_i}\left(e^{w_0\l}\sum_{c\in \pi_i(\S)\cap\Z^{d-d_i}}e^{p(c)}\sum_{z\in\S_c\cap\Z^{d_i}}t^{\sigma_i(z)}\right)=
e^{w_0\l}\sum_{c\in \pi_i(\S)\cap\Z^{d-d_i}}e^{p(c)}T_{\Pi_c}\left(\sum_{z\in\S_c\cap\Z^{d_i}}t^{\sigma_i(z)}\right)$$
where $T_{\Pi_c}$ is the operator defined in Section \ref{s.para}.
By \cite[Proposition 6.10]{KST}, which is applicable because of hypotheses (1)--(3), we get
$$T_{\Pi_c}\left(\sum_{z\in\S_c\cap\Z^{d_i}}t^{\sigma_i(z)}\right)=
\sum_{z\in M_i(\S)_c\cap\Z^{d_i}}t^{\sigma_i(z)}.$$
Hence,
$$\sum_{c\in \pi_i(\S)\cap\Z^{d-d_i}}e^{p(c)}T_{\Pi_c}\left(\sum_{z\in\S_c\cap\Z^{d_i}}t^{\sigma_i(z)}\right)=
\sum_{c\in \pi_i(\S)\cap\Z^{d-d_i}}e^{p(c)}\sum_{z\in M_i(\S)_c\cap\Z^{d_i}}t^{\sigma_i(z)}$$
Finally, since $\pi_i(\S)=\pi_i(M_i(\S))$ by (4) we get
$$\sum_{c\in \pi_i(\S)\cap\Z^{d-d_i}}e^{p(c)}\sum_{z\in M_i(\S)_c\cap\Z^{d_i}}t^{\sigma_i(z)}=\sum_{M_i(\S)\cap\Z^{d}}e^{p(x)}.$$
\end{proof}
This theorem gives an inductive algorithm for realizing every Demazure character as the exponential sum over the unions of certain faces of $P_\l$ if $P_\l$ satisfies an extra assumption.
\begin{defin} A $\l$-balanced parapolytope $P_\l\subset\R^d$ with the lowest vertex $0$ is called
{\em admissible} if $\dim P\cap\R^{d_i}\le 1$ for all $i=1,\ldots,r$.
\end{defin}
\begin{remark} DDO polytopes of \cite[Section 3]{K} are admissible (see the discussion at the end of \cite[Section 4.3]{K}).
In particular, polytopes $\GZ_\l-a_\l$ and $\SP_\l$ are admissible, which is easy to check directly.
\end{remark}
We now discuss the algorithm.
Let $B\subset G$ be a Borel subgroup, and $X=G/B$ complete flag variety.
For an element $w\in W$ of the Weyl group, denote by $X_w=\overline {BwB/B}$ the Schubert variety corresponding to $w$.
We will also consider the opposite Schubert varieties $X^w=\overline{B^-wB/B}$ where  $B^-\subset G$ denotes the opposite Borel subgroup.
Note that Schubert cycles $[X^{w_0w}]$ and $[X_w]$ coincide in $H^*(G/B,\Z)$.
Recall that with a dominant weight $\l$ of $G$, one can associate a $G$-linear line bundle $L_\l$ on the complete flag variety $X=G/B$ so that $H^0(X,L_\l)=V_\l^*$ as $G$-modules.

The {\em Demazure $B$-submodule} $V^+_{\l,w}$ can be defined as $H^0(X_w, L_\l|_{X_w})^*$.
Similarly, {\em Demazure $B^-$-submodule} $V^-_{\l,w}$ can be defined as $H^0(X_w, L_\l|_{X^w})^*$.
Let $\chi_w(\l)$ and $\chi^w(\l)$ denote the characters of $V^+_{\l,w}$  and $V^-_{\l,w}$, respectively (they are called
{\em Demazure characters}).
It is easy to check that $w_0\chi_w(\l)=\chi^{w_0w}(\l)$.
Let $s_{j_1}\ldots s_{j_\ell}$ be a reduced decomposition of $w_0ww_0^{-1}$ such that
$(j_1,\ldots,j_\ell)$ is a subword of $(i_1,\ldots,i_d)$.
\begin{cor}\label{t.Schubert}
Let $P_\l\subset \R^{d}$ be an admissible $\lb$-balanced parapolytope,
and $\S_w\subset P_\l$ the union of all faces produced from the vertex $0\in P_\l$ by applying
successively the operations $M_{j_\ell}$,\ldots, $M_{j_1}$.
Suppose that for every $1<k\le\ell$, the collection of faces $M_{j_k}\ldots M_{j_\ell}(0)$
satisfies conditions (3) and (4) of Theorem \ref{t.Demazure}.
Then
$$\chi^{w_0w}(\l)=e^{w_0\l}\sum_{x\in \S_w\cap\Z^d}e^{p(x)}.$$
\end{cor}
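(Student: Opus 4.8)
The strategy is to prove the identity by induction on $k$, running from $k=\ell+1$ down to $k=1$, with the inductive statement asserting that
$$\chi^{w_0 s_{j_k}\cdots s_{j_\ell}}(\l)=e^{w_0\l}\sum_{x\in\S_k\cap\Z^d}e^{p(x)},$$
where $\S_k:=\bigcup_{F\in M_{j_k}\cdots M_{j_\ell}(0)}F$ (and $\S_{\ell+1}=\{0\}$). The base case $k=\ell+1$ is the observation that $\chi^{w_0}(\l)=\chi(V_\l^*)$ restricted to a point, i.e.\ the extremal weight $e^{w_0\l}$, which equals $e^{w_0\l}\cdot e^{p(0)}$ since $p(0)=0$. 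The inductive step consists of applying the Demazure operator $D_{\a_{j_k}}$ to both sides: on the representation-theoretic side one uses the standard fact $D_{\a_i}\chi^{w}(\l)=\chi^{s_i w}(\l)$ whenever $\ell(s_i w)=\ell(w)+1$, which applies here because $s_{j_k}\cdots s_{j_\ell}$ is reduced; on the polytope side one invokes Theorem~\ref{t.Demazure} with $i=j_k$ and $S=M_{j_k}\cdots M_{j_\ell}(0)$.

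**Verifying the hypotheses of Theorem~\ref{t.Demazure}.** To apply the theorem at step $k$ one must check conditions (1)--(4) for the collection $S=M_{j_k}\cdots M_{j_\ell}(0)$. Condition (1), that every face contains the vertex $0$, follows by induction since mitosis on parapolytopes (Definition~\ref{d.mitosis}) only enlarges faces and starts from $\{0\}$; condition (2), that $S$ is a union of $L_{j_k}$-classes, follows from the geometric version of the Knutson--Miller identity recorded in Remark~\ref{r.LM} (mitosis applied to an $L$-class produces a single $L$-class), combined with admissibility, which guarantees that each fiber parallelepiped $\Pi_c$ is at most one-dimensional in the $j_k$-direction, so the reducedness needed in Definition~\ref{d.ladder} holds automatically. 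Here admissibility is also what makes every face $L_i$-reduced, so $M_{j_k}(F)$ and $L_{j_k}(F)$ are always defined. Conditions (3) and (4) are exactly the extra hypotheses assumed in the statement of the corollary, so nothing further is required.

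**Putting it together.** With hypotheses (1)--(4) verified, Theorem~\ref{t.Demazure} gives
$$D_{\a_{j_k}}\!\left(e^{w_0\l}\sum_{x\in\S_{k+1}\cap\Z^d}e^{p(x)}\right)=e^{w_0\l}\sum_{x\in\S_k\cap\Z^d}e^{p(x)},$$
and combining this with $D_{\a_{j_k}}\chi^{w_0 s_{j_{k+1}}\cdots s_{j_\ell}}(\l)=\chi^{w_0 s_{j_k}\cdots s_{j_\ell}}(\l)$ and the inductive hypothesis at level $k+1$ completes the step. At $k=1$ one has $s_{j_1}\cdots s_{j_\ell}=w_0 w w_0^{-1}$, hence $w_0 s_{j_1}\cdots s_{j_\ell}=w_0(w_0 w w_0^{-1})$; one then checks that this element is $w_0 w$ in the convention used for the superscript, which identifies the left-hand side with $\chi^{w_0 w}(\l)$, as claimed. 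The main obstacle is not the induction itself but making sure the bookkeeping between the two labelings of the Schubert varieties is consistent — specifically, that the reduced subword $(j_1,\dots,j_\ell)$ of $(i_1,\dots,i_d)$ for $w_0 w w_0^{-1}$ matches up, under mitosis, with the Demazure operators producing $\chi^{w_0 w}$, and that at each intermediate stage the length actually goes up by one so that $D_{\a_{j_k}}\chi^{\cdots}=\chi^{s_{j_k}\cdots}$ is legitimate; this is where the requirement that the subword be reduced is used. A secondary point worth spelling out is that one must know the polytope identity characterizes the character uniquely, i.e.\ that $|P_\l\cap\Z^d|=\dim V_\l$ and the weight-multiplicity compatibility, so that the two sides of the final equation, once shown to have equal image under every $D_{\a_i}$ starting from $e^{w_0\l}$, are genuinely equal as elements of $\Z[\Lambda_G]$ — but in fact the induction produces the equality directly, so this is only a sanity check.
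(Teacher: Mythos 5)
Your convex-geometric half is exactly the paper's argument: apply Theorem \ref{t.Demazure} once per letter of the subword, with conditions (3)--(4) assumed and (1)--(2) checked to hold automatically. The genuine gap is in your representation-theoretic bookkeeping. The recursion you invoke, ``$D_{\a_i}\chi^{v}(\l)=\chi^{s_iv}(\l)$ whenever $\ell(s_iv)=\ell(v)+1$'', is stated with the wrong length condition for \emph{opposite} Demazure characters: $X^{s_iv}$ contains $X^{v}$ (so the character grows) precisely when $\ell(s_iv)=\ell(v)-1$. More importantly, the intermediate characters are mislabelled. Starting from $\chi^{w_0}(\l)=e^{w_0\l}$ and applying $D_{\a_{j_\ell}},\ldots,D_{\a_{j_k}}$, the index of the opposite Demazure character is multiplied by $s_{j_k}$ on the \emph{left}, so the correct intermediate statement is $\chi^{s_{j_k}\cdots s_{j_\ell}\,w_0}(\l)=e^{w_0\l}\sum_{x\in\S_k\cap\Z^d}e^{p(x)}$, not $\chi^{w_0 s_{j_k}\cdots s_{j_\ell}}(\l)$; your version is not even consistent with your own recursion (left multiplication by $s_{j_k}$ turns $w_0 s_{j_{k+1}}\cdots s_{j_\ell}$ into $s_{j_k}w_0 s_{j_{k+1}}\cdots s_{j_\ell}$, not into $w_0 s_{j_k}s_{j_{k+1}}\cdots s_{j_\ell}$). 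Accordingly your final reconciliation fails: at $k=1$ your element is $w_0(w_0ww_0^{-1})=ww_0$, which is \emph{not} $w_0w$ in general, whereas the correct chain ends at $(w_0ww_0^{-1})w_0=w_0w$, which is exactly what the corollary asserts. The paper sidesteps all of this by quoting Andersen's Demazure character formula directly in the form $\chi^{w_0w}(\l)=D_{\a_{j_1}}\cdots D_{\a_{j_\ell}}e^{w_0\l}$ (with $s_{j_1}\cdots s_{j_\ell}$ reduced for $w_0ww_0^{-1}$) and only transforming the right-hand side by Theorem \ref{t.Demazure}; if you want to track intermediate characters, you must fix the side of multiplication and the length condition, e.g.\ by transporting through $w_0\chi_w(\l)=\chi^{w_0w}(\l)$ and using the usual length-increasing recursion for $\chi_w$.

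Two smaller inaccuracies in your verification of hypotheses (1)--(2). Geometric mitosis does not ``enlarge'' faces: the faces in $M_i(\G)$ have dimension one larger but in general do not contain $\G$ (already for a parallelepiped, the mitosis of the vertex $(\mu_1,\ldots,\mu_n)$ is the whole broken line of essential edges, most of which miss that vertex); the reason every face produced still contains the vertex $0$ is admissibility of $P_\l$, which is precisely the paper's argument for (1). Likewise, admissibility only says that the fiber through the origin, $P_\l\cap\R^{d_i}$, is at most a segment; it says nothing about $\Pi_c$ for a general $c\in\G^\circ$, so it does not yield the $L_i$-reducedness you claim. What sustains condition (2), including reducedness, is Remark \ref{r.LM} (which you do cite): mitosis applied to a union of $L$-classes of reduced faces again produces a union of $L$-classes, starting from the vertex $0$, which is trivially reduced.
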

\begin{proof} By the Demazure character formula \cite{A} we have
$$\chi^{w_0w}(\l)=D_{\a_{j_1}}\ldots D_{\a_{j_\ell}}e^{w_0\l}.$$
We now proceed by induction applying Theorem \ref{t.Demazure} repeatedly to the right hand side.
Note that conditions (1) and (2) of this theorem are fulfilled for $M_{i_k}\ldots M_{i_1}(0)$ for all $k<\ell$.
Indeed, if a face $\G$ contains $0$ then all faces in $M_i(\G)$ contain $0$ because $P_\l$ is admissible, and by Remark \ref{r.LM} the mitosis applied to a union of $L$-classes produces a union of $L$-classes.
\end{proof}
For $G=GL_n$ and $\GZ_\l-a_\l$, this corollary reduces to \cite[Theorem 5.1]{KST} and holds for all $w\in W$.
It is easy to check that for $G=Sp_4$ and $\SP_\l$, conditions of Corollary \ref{t.Schubert} are also satisfied for all $w$.
More generally, condition (4) is satisfied for all $w$ if $P_\l$ is a DDO polytope of \cite[Theorem 3.6]{K} (simply by construction of these polytopes).
Condition (3) is trickier to check as the case of Gelfand-Zetlin polytopes shows (see \cite[Lemma 6.13]{KST}).
Whenever Corollary \ref{t.Schubert} holds for all $w\in W$, the general results of \cite[Section 2]{KST} on polytope rings allow one to model Schubert calculus on $G/B$ by intersecting faces of $P_\l$.
For $GL_n$ and Gelfand--Zetlin polytopes this was done in \cite{KST}, and the example
with $Sp_4$ and $\SP_\l$ will be considered in the next section.

\section{$Sp_4$ example}\label{s.Sp_4}
We now apply the results of the preceding section to $Sp_4$ and the symplectic DDO polytope $\SP_\l$ from Example \ref{e.Sp_4}.
We explain an algebro-geometric meaning of $\SP_\l$ and outline applications of Corollary \ref{t.Schubert} to the Schubert calculus on $Sp_4/B$.
\subsection{DDO polytope as Newton--Okounkov body}
First, let us discuss the algebro-geometric interpretation of the symplectic DDO polytope.
Recall that $\a_1$ denotes the shorter root and $\a_2$ denotes the longer one.
Let $\om_1$, $\om_2$ be the corresponding fundamental weights, and
$\l=\l_1\om_1+\l_2\om_2$ a dominant weight of $Sp_4$.
We are going to identify $\SP_\l$ with the Newton-Okounkov polytope of $L_\l$ for a natural geometric valuation $v$ on $X$.

To define the valuation $v$ we introduce coordinates on an open Schubert cell in $X$.
Choose a basis  in $\C^4$ so that $\omega:=e_1^*\wedge e_4^*+e_2^*\wedge e_3^*$ is the symplectic form preserved by $Sp_4$.
Points in $X$ can be identified with {\em isotropic} complete flags $(V^1\subset V^2\subset V^3\subset\C^4)$. 
A flag is {\em isotropic} if $\omega|_{V^2}=0$ and $V^3={V^1}^\bot:=\{v\in\C^4\ | \ \omega(v,u)=0 \ \forall u\in V^1\}$.
Taking projectivization we also identify points in $X$ with projective partial flags $(a=\P(V^1)\in l=\P(V^2))$.
Fix the flag $(a_0,l_0)\in X$  where $a_0=(1:0:0:0)$ and $l_0=\langle a_0, (0:1:0:0)\rangle$, i.e., $(a_0,l_0)$ is the fixed point for the upper-triangular Borel subgroup $B\subset Sp_4$.
The open Schubert cell in $X$ with respect $(a_0,l_0)$ consists of all $(a,l)$ such that $(a_0,l_0)$ and $(a,l)$ are in general position (i.e., $a_0\notin l$, $a\notin l_0$, $l_0\cap l=\emptyset$ etc).
The Schubert varieties with respect to $(a_0,l_0)$ can be described as follows:
$$X_{\id}=\{(a_0,l_0)\}; \quad X_{s_1}=\{l=l_0\};\quad X_{s_2}=\{a=a_0\};$$
$$\quad X_{s_1s_2}=\{a\in l_0\}; \quad X_{s_2s_1}=\{a_0\in l\}; \quad X_{s_1s_2s_1}=\{l\cap l_0\ne\emptyset\};$$
$$X_{s_2s_1s_2}=\{a\in a_0^\bot\};  \quad X_{s_1s_2s_1s_2}=X_{s_2s_1s_2s_1}=X.
$$

Define coordinates on the open Schubert cell:
$$a=(y+xz:z:-x:1); \quad l=\langle a, (z+xt:t:1:0)\rangle.$$
These coordinates are  chosen so that the flag $\{x=y=z=t=0\}\subset\{x=y=z=0\}\subset\{x=y=0\}\subset\{x=0\}\subset X$
coincides with the flag of translated Schubert subvarieties:
$$s_1s_2s_1s_2X_{id}\subset s_1s_2s_1X_{s_2}\subset s_1s_2X_{s_1s_2}\subset s_1X_{s_2s_1s_2}\subset X$$
(after intersecting with the open Schubert cell).
The flag corresponds to the decomposition $\w_0=s_1s_2s_1s_2$, and the coordinates $(x,y,z,t)$ come naturally if one considers the Bott--Samelson variety $\widetilde X_{\w_0}$ (see \cite[Section 2.2]{Ka}).
Fix the lexicographic ordering on monomials in $x$, $y$, $z$, $t$, i.e. , $x^{k_1}y^{k_2}z^{k_3}t^{k_4}\succ x^{l_1}y^{l_2}z^{l_3}t^{l_4}$ iff
there exists $j\le4$ such that $k_i=l_i$ for $i<j$ and $k_j>l_j$.
Let $v:=v^{\w_0}$ denote the lowest order term valuation on $\C(X)$ associated with the flag and ordering  (cf. \cite[Section 6.4]{An}, 
\cite[Remark 2.3]{Ka}), and $\Delta_v(X,L_\l)\subset\R^4$ the Newton--Okounkov convex body corresponding to $X$, $L_\l$ and $v$ (see \cite{KK} for a definition).
We fix coordinates $(y_1,y_2,y_3,y_4)$ in $\R^4$ so that $v(x^{k_1}y^{k_2}z^{k_3}t^{k_4})=(k_1,k_2,k_3,k_4)$.
The valuation $v$ is natural from a geometric viewpoint: if $v(f)=(k_1,k_2,k_3,k_4)$ then $k_1$ is the order of vanishing of $f$ along the hypersurface $\{x=0\}$, while $k_2$ is the order of vanishing of $(x^{-k_1}f)|_{\{x=0\}}$ along the hypersurface $\{x=y=0\}\subset\{x=0\}$ and so on.

\begin{prop} \label{p.NO} Define a unimodular linear transformation of $\R^4$ by the formula
$$\varphi:(y_1,y_2,y_3,y_4)\mapsto(-y_1,-y_1-y_2,y_3+2y_4,y_4).$$
Then $\SP_\l=\varphi(\Delta_v(X,L_\l))+(\l_1,\l_1+\l_2,0,0).$
In particular, $\Delta_v(X,L_\l)$ can be described by inequalities:
$$0\le y_1, y_2, y_3, y_4; \quad  y_1 \le \l_1; \quad  2(y_1+y_2)+y_3+2y_4 \le 2(\l_1+\l_2); $$ $$y_1+y_2+y_3+2y_4 \le \l_1+2\l_2; \quad  y_4\le \l_2$$
\end{prop}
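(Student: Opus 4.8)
The plan is to compute the Newton--Okounkov body $\Delta_v(X,L_\l)$ directly from the coordinates $(x,y,z,t)$ on the open Schubert cell and the explicit description of sections of $L_\l$, and then verify that the unimodular transformation $\varphi$ followed by the translation by $(\l_1,\l_1+\l_2,0,0)$ carries it onto $\SP_\l$. First I would fix a convenient model for $H^0(X,L_\l)$: since $L_\l$ is the pullback of $\Oc(\l_1)\boxtimes\Oc(\l_2)$ under the natural embedding $X\hookrightarrow \P(V_{\om_1})\times\P(V_{\om_2})$ (with $V_{\om_1}=\C^4$ the standard representation and $V_{\om_2}$ the second fundamental representation), a basis of sections is given by degree-$\l_1$ monomials in the coordinates of $a=(y+xz:z:-x:1)$ times degree-$\l_2$ monomials in the Plücker-type coordinates of the line $l=\langle a,(z+xt:t:1:0)\rangle$. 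Evaluating the valuation $v$ (lowest-order term in the lexicographic order $x\succ y\succ z\succ t$) on these explicit polynomials in $x,y,z,t$ gives a finite set of lattice points in $\R^4$ for each $\l$, and $\Delta_v(X,L_\l)$ is the closure of $\bigcup_{m\ge1}\frac1m\,v(H^0(X,L_{m\l}))$.

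The key computational step is to identify this set of valuation vectors. I would argue that the vectors arising from the standard monomial basis of $H^0(X,L_\l)$ are exactly the lattice points of a polytope, and that this polytope stabilizes (is already the full Newton--Okounkov body at $m=1$), using that the chosen flag of subvarieties $\{x=0\}\supset\{x=y=0\}\supset\{x=y=z=0\}\supset\{x=y=z=t=0\}$ is the flag of translated Schubert subvarieties attached to $\w_0=s_1s_2s_1s_2$; this is precisely the situation in which Anderson's and Kaveh's results (\cite{An,Ka}) guarantee that $v$ has a finite Khovanskii basis and $\Delta_v(X,L_\l)$ is a lattice polytope computed monomial-by-monomial. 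Concretely, one checks that $v(f)=(k_1,k_2,k_3,k_4)$ records successive vanishing orders along this flag, and then a direct inspection of how a general section of $L_\l$ restricts and vanishes along each $\{x=0\}\supset\cdots$ produces inequalities $0\le y_1\le\l_1$, $y_4\le\l_2$, together with the two ``diagonal'' inequalities $2(y_1+y_2)+y_3+2y_4\le 2(\l_1+\l_2)$ and $y_1+y_2+y_3+2y_4\le\l_1+2\l_2$ and positivity of all coordinates — exactly the list in the statement.

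Having the inequality description of $\Delta_v(X,L_\l)$, the remainder is a routine but bookkeeping-heavy verification: apply $\varphi\colon(y_1,y_2,y_3,y_4)\mapsto(-y_1,-y_1-y_2,y_3+2y_4,y_4)$, translate by $(\l_1,\l_1+\l_2,0,0)$, and check term by term that the six defining inequalities of $\Delta_v(X,L_\l)$ transform into the six inequalities defining $\SP_\l$ in Example \ref{e.Sp_4}, namely $0\le y_1\le\l_1$, $y_2\le y_1+\l_2$, $y_3\le 2y_2$, $y_3\le y_2+\l_2$, $0\le y_4\le\l_2$, $y_4\le y_3/2$. Since $\varphi$ is unimodular, it preserves lattice points, so this also reproves $|\SP_\l\cap\Z^4|=\dim V_\l$ as a consistency check. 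The main obstacle is the middle step: justifying rigorously that the Newton--Okounkov body equals the lattice polytope cut out by these six inequalities — i.e., that no ``extra'' valuation vectors appear and that the apparent polytope is the convex hull of the ones that do. For $Sp_4$ this can be pinned down either by a direct dimension count (verifying $|P_\l\cap\Z^4|=\dim V_\l$ for the candidate polytope $P_\l$ and invoking that $\dim\Delta_v(X,L_\l)=4$ forces equality once one has containment of the valuation semigroup), or by appealing to the Bott--Samelson / string-polytope machinery of \cite{Ka,Ka2} that identifies $\Delta_v(X,L_\l)$ with an explicitly known polytope and then matching it with $\SP_\l$ via $\varphi$.
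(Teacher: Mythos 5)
Your strategy inverts the difficulty. You aim to compute $\Delta_v(X,L_\l)$ from scratch and in particular to establish the \emph{upper-bound} direction — that the valuation vectors of \emph{all} sections of \emph{all} powers of $L_\l$ satisfy the six inequalities — and this is exactly the step you leave unproved. "Direct inspection of how a general section restricts and vanishes" does not work as stated: a general section is a linear combination of monomial products of the basis sections, and the lex-lowest term of such a combination is not controlled by the lowest terms of the individual products (cancellations can produce new valuation vectors), so the diagonal inequalities such as $2(y_1+y_2)+y_3+2y_4\le 2(\l_1+\l_2)$ do not follow monomial-by-monomial. Likewise, the claim that the level $m=1$ picture "stabilizes" because Anderson's and Kaveh's results "guarantee a finite Khovanskii basis" is not available here: finite generation of the value semigroup is a hypothesis to be verified, not an automatic consequence, and the string-polytope identification of \cite{Ka} concerns the \emph{highest}-term valuation $v_{\w_0}$ attached to the flag of Schubert subvarieties of terminal subwords, not the lowest-term valuation $v=v^{\w_0}$ used in the proposition. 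In fact the paper points out (Remark \ref{r.DDO} and the remark following the proposition) that the corresponding string polytope has $12$ vertices while $\SP_\l$ has $11$, so the identification you invoke as a fallback would be false for this valuation. Your other fallback (count lattice points of the candidate polytope and compare with $\dim V_\l$) has the right flavor, but it still presupposes the hard containment $\Delta_v(X,L_\l)\subset P_\l$, which is the gap.

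The paper's proof avoids the hard direction entirely. It first gets equality of volumes: $|\SP_\l\cap\Z^4|=\dim V_\l$ as polynomials in $\l$ by \cite[Theorem 3.6]{K}, so by Hilbert's theorem and the Kaveh--Khovanskii volume formula \cite{KK} both $\SP_\l$ and $\Delta:=\varphi(\Delta_v(X,L_\l))+(\l_1,\l_1+\l_2,0,0)$ have volume $\frac1{4!}\deg p_\l(Sp_4/B)$. Then only the \emph{easy} inclusion $\SP_\l\subset\Delta$ is needed, and it is obtained by exhibiting finitely many explicit sections: the bases of $H^0(X,L_{\om_1})$ and $H^0(X,L_{\om_2})$ in the coordinates $(x,y,z,t)$, whose lowest-order terms give simplices inside $\Delta_v(X,L_{\om_1})$ and $\Delta_v(X,L_{\om_2})$ mapping under $\varphi$ onto $\SP_{\om_1}$ and $\SP_{\om_2}$ (up to the stated translations); superadditivity of Newton--Okounkov bodies \cite[Theorem 4.9(3)]{KK} together with the Minkowski decomposition $\SP_\l=\l_1\SP_{\om_1}+\l_2\SP_{\om_2}$ then gives the inclusion, and equal volumes force equality. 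If you want to salvage your outline, you should either adopt this volume-plus-one-inclusion scheme or supply an actual argument bounding the valuations of arbitrary sections of $L_{m\l}$ for all $m$; as written, the central step of your proof is missing.
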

\begin{proof}
Note that $|\SP_\l\cap \Z^4|=\dim V_\l$ as polynomials in $\l$ by \cite[Theorem 3.6]{K}.
Comparing the highest degree homogeneous parts in $\l_1$ and $\l_2$ on both sides and using Hilbert's theorem we get
$${\rm volume}(\SP_\l)=\frac1{4!}\deg p_\l(Sp_4/B),$$
where $p_\l:Sp_4/B\to\P(V_\l)$ is the projective embedding of the flag variety corresponding to the weight $\l$.
Hence, to compare $\SP_\l$ and $\Delta:=\varphi(\Delta_v(X,L_\l))+(\l_1,\l_1+\l_2,0,0)$ it is enough to show that $\SP_\l\subset\Delta$.
Since both convex bodies have the same volumes the inclusion will imply the exact equality.

We now check that $\SP_\l\subset\Delta$.
There is a natural embedding $X\hookrightarrow\P^3\times IG(2,4)$; $(a,l)\in a\times l$, where
$IG(2,4)$ is the Grassmannian of isotropic planes in $\C^4$.
Let $p_{\om_1}$, $p_{\om_2}$ denote the projections of $X$ to the first and second factor, respectively.
Then $L_{\om_1}=p_{\om_1}^*\Oc_{\P^3}(1)$ and $L_{\om_2}=p_{\om_2}^*\pi^*\Oc_{\P^4}(1)$ where $\pi:IG(2,4)\to\P^4$ is the Pl\"ucker embedding.
Hence, $H^0(X,L_{\om_1})=\langle 1,-x,y+xz,z\rangle$ and $H^0(X,L_{\om_2})=\langle 1,-(y+2xz+x^2t), z+xt, yt-z^2,t\rangle$.
By taking the lowest order terms of basis sections we get that $\Delta_v(X,L_{\om_1})$ contains the simplex with the vertices
$$(0,0,0,0),(1,0,0,0),(0,1,0,0),(0,0,1,0)$$
and $\Delta_v(X,L_{\om_2})$ contains the simplex with the vertices
$$(0,0,0,0),  (0,1,0,0), (0,0,2,0), (0,0,0,1).$$
It is easy to check that $\varphi$ takes these two simplices to  $\SP_{\om_1}-(1,1,0,0)$ and $\SP_{\om_2}-(0,1,0,0)$, respectively.
Since $L_\l=L_{\om_1}^{\otimes\l_1}\otimes L_{\om_2}^{\otimes\l_2}$ the super-additivity of Newton--Okounkov bodies (see \cite[Theorem 4.9(3)]{KK}) implies that $\Delta_v(X,L_{\l})$ contains the Minkowski sum $\l_1\Delta_v(X,L_{\om_1})+\l_2\Delta_v(X,L_{\om_2})$.
Hence, $\Delta$ contains $\l_1 \SP_{\om_1}+\l_2 \SP_{\om_2} =\SP_\l$ as desired.
\end{proof}

\begin{example}
Take $\l=\rho$, i.e., $\l_1=\l_2=1$.
The projective embedding $p_\rho:Sp_4/B\to \P(V_\rho)$ comes from the composition of maps
$$Sp_4/B\hookrightarrow\P^3\times IG(2,4)\stackrel{\id\times\pi}{\longrightarrow} \P^3\times \P^4 \stackrel{\mbox{\tiny Segre}}{\longrightarrow}\P^{19}.$$
The image of $Sp_4/B$ is contained in $\P(V_\rho)\subset\P^{19}$.
In coordinates $(x,y,z,t)$, the embedding $Sp_4/B\hookrightarrow \P(V_\rho)\subset\P^{19}$ takes the point $(x,y,z,t)$ to
$$\begin{pmatrix}
1\\
-x\\
y+xz\\
z
\end{pmatrix}
\times
\begin{pmatrix}
1&&-(y+2xz+x^2t)&&z+xt&&yt-z^2&&t\\
\end{pmatrix}$$
Applying the valuation $v$ we get all
$16=\dim V_\rho$ integer points in $\SP_\rho$ (vertices of $\SP_\rho$ are underlined).
$$\underline{(0,0,0,0)}, (0,1,0,0), (0,0,1,0), (0,0,2,0), \underline{(0,0,0,1)},$$
$$\underline{(1,0,0,0)}, \underline{(1,1,0,0)}, (1,0,1,0), \underline{(1,0,2,0)}, \underline{(1,0,0,1)},$$
$$\hspace{1.9cm} \underline{(0,2,0,0)}, (0,1,1,0),  \underline{(0,1,2,0)}, \underline{(0,1,0,1)},$$
$$\hspace{1.9cm} \hspace{1.9cm} \hspace{1.9cm} \underline{(0,0,3,0)}, \underline{(0,0,1,1)}.$$
\end{example}
\begin{remark} Similar arguments can be applied to the valuation $v':=v^{\w_0'}$ corresponding to the decomposition $\w_0'=s_2s_1s_2s_1$, i.e., to the flag
of translated Schubert subvarieties
$$s_2s_1s_2s_1X_{id}\subset s_2s_1s_2X_{s_1}\subset s_2s_1X_{s_2s_1}\subset s_2X_{s_1s_2s_1}\subset X.$$
It is easy to check that the Newton--Okounkov body $\Delta_{v'}(X,L_\lambda)$ is obtained from $\Delta_{v}(X,L_\lambda)$ by the unimodular linear transformation $(y_1,y_2,y_3,y_4)\mapsto(y_4,y_3,y_2,y_1)$.
This agrees with the fact that symplectic DDO polytopes corresponding to $\w_0$ and $\w_0'$ are also the same up to an affine transformation (see \cite[Example 3.4]{K}).
\end{remark}
\begin{remark}
In \cite{Ka}, the Newton--Okounkov bodies of flag varieties for a different valuation $v_{\w_0}$ were identified with string
polytopes of \cite{L}.
Namely, $v_{\w_0}$ is the highest term valuation associated with the flag of Schubert subvarieties corresponding to the terminal subwords of $\w_0$.
For $Sp_4$ and $\w_0=s_1s_2s_1s_2$, this is the flag $X_{\id}\subset X_{s_2}\subset X_{s_1s_2}\subset X_{s_2s_1s_2}\subset X$.
By Remark \ref{r.DDO}, the polytopes $\Delta_{v_{\w_0}}(X,L_\l)$ and $\Delta_{v^{\w_0}}(X,L_\l)$
are not combinatorially equivalent (they have different number of vertices).
In particular, one can not expect a straightforward relation between valuations $v_{\w_0}$ an $v^{\w_0}$ (cf. \cite[Remark 2.3]{Ka}).
\end{remark}
\subsection{Newton--Okounkov polytopes of Schubert varieties}
We now identify (the unions of) faces of $\SP_\l$ obtained in Example \ref{e.Sp_4} with
generalized Newton--Okounkov polytopes of Schubert subvarieties of $X$.
This allows us to extend results of \cite{KST} on Schubert calculus in terms of polytope rings from Gelfand--Zetlin polytopes and $GL_n$ to the symplectic DDO polytope $\SP_\l$ and $Sp_4$.
A different extension was previously obtained in \cite{I} for the string polytopes of $Sp_4$ associated with $\w_0=s_2s_1s_2s_1$ (this polytope coincides up to a unimodular change of coordinates with the {\em symplectic Gelfand--Zetlon polytope} \cite[Corollary 6.2]{L}).

We say that the union of faces $\Delta_w=\bigcup_{F\subset \SP_\l} F$ is a {\em generalized Newton--Okounkov polytope} of a 
Schubert subvariety $X_w$ if $|\Delta_w\cap\Z^4|=\dim H^0(X_w,L_\l|_{X_w})$ as polynomials in $\l$.
In particular, $\SP_\l=\Delta_{w_0}$ and any vertex of $\SP_\l$ is a valid choice for $\Delta_{id}$.
Corollary \ref{t.Schubert} immediately yields the following choices for the other Schubert varieties:
$$\Delta_{s_1}=H_2^+\cap H_3^+\cap H_4; \quad \Delta_{s_2s_1}=H_3^+\cap H_4^+; \quad \Delta_{s_1s_2s_1}=H_4^+;$$
$$\Delta_{s_2}=H_1^+\cap H_3^+\cap H_4^+; \quad \Delta_{s_1s_2}=(H_1^+\cap H_4)^+\cup (H_2^+\cap H_4^+);
\quad \Delta_{s_2s_1s_2}=H_1^+\cup H_2^+\cup H_3^+,$$
where $H_1^+$,\ldots, $H_4^+$ denote the facets of $\SP_\l$ given by equations $y_1=0$, $2y_2=y_3$, $y_3=2y_4$, $y_4=0$, respectively.
Applying results of \cite[Section 2]{KST} and \cite[Theorem 4.1]{Ka2} to $\SP_\l$ we can multiply Schubert cycles in $H^*(X,\Z)$ by intersecting their generalized Newton--Okounkov polytopes if the latter are transverse.
For instance,
$$[X_{s_1s_2s_1}]\cdot [X_{s_2s_1s_2}]=[\Delta_{s_1s_2s_1}\cap\Delta_{s_2s_1s_2}]=[\Delta_{s_1s_2}\cup\Delta_{s_2s_1}]=
[X_{s_1s_2}]+[X_{s_2s_1}].$$
Using techniques of \cite[Section 2]{KST} we can realize the Schubert calculus on $X$ in terms of $\SP_\l$.
Namely, \cite[Formula (1)]{KST} gives four linear relations between (equivalence classes of) facets of $\SP_\l$:
$$[H_1^+] + [H_2^-]=[H_1^-]; \quad 2[H_2^+] + [H_3^-]=[H_2^-];$$
$$\ [H_2^+]+[H_3^-]=[H_3^+]; \quad 2[H_3^+]+[H_4^-]=[H_4^+],$$
where $H_1^+$,\ldots, $H_4^+$ denote the facets of $\SP_\l$ given by equations $y_1=\l_1$, $y_2=y_1+\l_2$, $y_3=y_2+\l_2$, $y_4=\l_2$, respectively.
Using these relations we can get new generalized Newton--Okounkov polytopes, e.g.
$$
\Delta'_{s_1s_2s_1}=H_2^-\cup H_3^-\cup H_4^-;\quad \Delta'_{s_2s_1s_2}=H_1^-,
$$
such that the intersections $\Delta_v\cap\Delta'_w$ are transverse for all $v$ and $w$.

\section{Combinatorics of geometric mitosis and open questions}\label{s.comb}
We now discuss combinatorics of mitosis on admissible balanced parapolytopes.
We outline a combinatorial algorithm for generating faces that appear in Corollary \ref{t.Schubert}.
For Gelfand--Zetlin polytopes, this algorithm reduces to mitosis of \cite{KnM} on pipe dreams.
Generalizing combinatorics of Example \ref{e.Sp_4} we define mitosis on {\em skew pipe dreams} for $Sp_{2n}$.
In the end of this section, we formulate open questions.

\subsection{Mitosis on vertex cone}
Let $P\subset\R^d$ be an admissible $\lb$-balanced parapolytope with the lowest vertex $0$.
Since the faces that appear in Corollary \ref{t.Schubert} are obtained from the vertex $0\in P$ by mitosis operations they contain $0$.
Hence, to describe these faces it is enough to consider the combinatorics of  the  vertex cone $C_0$ of $P$ at $0$ and not the whole $P$.
Recall that the {\em vertex cone} $C_a$ of a vertex $a\in P$ by definition consists of all $b\in\R^d$ such that $a+\l(b-a)\in P$ for some $\l\ge 0$.
Let $H_1$,\ldots, $H_{d'}$  be the facets of $C_0$.
Note that $d'\ge d$, and $0$ is a simple vertex of $P$ if and only if $d=d'$.
Facets $H_j$ correspond to homogeneous inequalities $l_j\ge0$ that define $C_0$.

Fix $i\in\{1,\ldots, r\}$ and consider $c\in\R^d/\R^{d^i}$.
Since $P$ is a parapolytope we have that $\Pi_c:=P\cap(c+\R^{d_i})$ is given by inequalities
$\mu^i_j(c)\le x^i_j\le\nu^i_j(c)$ for $j=1$,\ldots, $i_d$,
where $\mu^i_j(c)$ are linear functions.
If $P$ is admissible then the parallelepiped $\Pi_0$ is a segment (or a point if $(\l,\a_1)=0$) given by inequality 
$0\le x^i_1\le(\l,\a_1)$ and equalities $x_j^i=0$ for $j=2,$\ldots, $d_i$.
So $\mu^i_1(0)=\mu^i_j(0)=\nu^i_j(0)=0$ for all $j\ge 2$, and functions $\mu^i_j(c)$ and $\nu^i_j(c)$ are all homogeneous except for possibly $\nu^i_1(c)$.
In particular, $U_c:=C_0\cap (c+\R^{d_i})$ is given by inequalities $\mu^i_j(c)\le x^i_j\le \nu^i_j(c)$ for $j=2$,\ldots, $i_d$ and $\mu^i_1\le x^i_j$, that is, $U_c$ is almost a parallelepiped: it might be not bounded only in $x_1^i$-direction (if $\nu^i_1(c)\ne\mu^i_1(c)$).
Note that mitosis on parallelepipeds defined in Section \ref{s.para} never produces faces that lie in the facet $x_1=\nu_1$ (unless $\mu_1=\nu_1$).
Hence, the definition of mitosis on parallelepipeds goes verbatim for the faces of $U_c$.

Let $\Gamma\subset C_0$ be a face.
Choose $c\in \G^\circ$.
Choose facets $H_{j_1}$,\ldots, $H_{j_\ell}$ of $C_0$ such that
every face of $U_c$ can be uniquely represented as the intersection of $U_c$ with some of these facets.
In particular, $\Gamma_c=H_{i_1}\cap\ldots\cap H_{i_k}\cap U_c$ for some $\{i_1,\ldots,i_k\}\subset\{j_1,\ldots,j_\ell\}$, hence,
$\Gamma=H_{i_1}\cap\ldots\cap H_{i_k}\cap P$.
Then mitosis on $U_c$ tells us which facets in $\Gamma=H_{i_1}\cap\ldots\cap H_{i_k}$ should be deleted and which facets  added in order to get all faces in $M_i(\G)$.
We get a purely combinatorial operation $M_i$ on the subsets of the set $\{H_{i_1},\ldots,H_{i_\ell}\}$.
Facets of $C_0$ and all operations $M_i$ can be encoded by diagrams similar to pipe dreams.
Usual pipe dreams correspond to the case when $C_0$ is a vertex cone of the Gelfand--Zetlin polytope, or equivalently, $C_0$ is the cone of adapted strings in type $A$
(see \cite[Theorem 5.1]{L}).

Below we consider a new combinatorial algorithm that arises from the geometric mitosis on the cone of adapted strings in type $C$.

\subsection{Mitosis on skew pipe dreams}
Let $G=Sp_{2n}$, i.e., $r=n$ and $d=n^2$.
Take the reduced decomposition $\w_0=(s_n s_{n-1}\ldots s_2 s_1 s_2\ldots s_{n-1}s_n)\ldots (s_2s_1s_2)(s_1)$.
Then $\R^{n^2}=\R^{n}\oplus\R^{2n-2}\oplus\R^{2n-4}\oplus\ldots\oplus\R^2$.
Note that from now on $s_1$ corresponds to the longer root in accordance with \cite{L}.
Let $P\subset\R^{n^2}$ be a parapolytope with the lowest vertex $0$ such that
the vertex cone $C_0$ is defined by inequalities
$$0\le x^i_2\le x^{i-1}_4\le x^{i-2}_6\le\ldots\le x^2_{2i-2}\le x^1_{i}\le x^2_{2i-3}\le\ldots\le x^{i-2}_5\le x^{i-1}_3\le x^i_1 \eqno(*)$$
for all $i=1$,\ldots, $n$.
There are $n^2$ inequalities in $(*)$, in particular, $0$ is a simple vertex of $C_0$.
The cone $C_0$ is exactly the cone of adapted strings for the decomposition $\w_0$  (see \cite[Theorem 6.1]{L}).

Faces of $C_0$ can be encoded by {\em skew pipe dreams}.
A {\em skew pipe dream} of size $n$ is a $(2n-1)\times{n}$ table whose cells are either empty or filled with $+$.
Only cells $(i,j)$ with $n-j< i < n+j$ are allowed to have $+$.
When drawing a skew pipe dream we omit cells $(i,j)$ that do not satisfy these inequalities.
For instance, all tables of Example \ref{e.Sp_4} are skew pipe dreams of size $n=2$.
There is a bijective correspondence between faces of $C_0$ and skew pipe dreams: to get the skew pipe dream  $D(\G)$ corresponding to a face $\G\subset C_0$ replace an inequality $x^{k'}_{l'}\le x^k_l$ (or $0\le x^k_l$) in $(*)$ by $+$ at cell
$$
\left\{
\begin{array}{lll}(n+k-1,k+\frac{l-1}{2}) & \mbox{ if }& l \mbox{ is odd}, \ k\ne 1\\
(n-k+1,k+\frac{l}2-1) & \mbox{ if }& l \mbox{ is even},  \ k\ne 1\\
(n,l)& \mbox{ if } &k=1\\
\end{array}
\right. \eqno(**)
$$
whenever $x^{k'}_{l'}=x^k_l$ (or $0=x^k_l$) identically on $\G$.
Table $(**)$ gives a bijection between coordinates $x^k_l$ and (fillable) cells of a skew pipe dream.

\begin{example} Let $n=3$.
The bijection between cells and coordinates given by (**) is depicted on the left.
The skew pipe dream $D(G)$ of the face $\G=\{0=x^1_1; \ 0=x^2_2=x^1_2; \ 0=x^3_2; \ x^2_3=x^3_1 \}$ is depicted on the right.

$$
\begin{array}{|c|}
\hline
x_1^1 \\
\hline
\end{array}
\begin{array}{|c|}
\hline
x_2^2\\
\hline
x_2^1\\
\hline
x_1^2\\
\hline
\end{array}
\begin{array}{|c|}
\hline
x_2^3\\
\hline
x_4^2\\
\hline
x_3^1\\
\hline
x_3^2\\
\hline
x_1^3\\
\hline
\end{array}
\hspace{3cm}
\begin{array}{|c|}
\hline
+ \\
\hline
\end{array}
\begin{array}{|c|}
\hline
+\\
\hline
+\\
\hline
\\
\hline
\end{array}
\begin{array}{|c|}
\hline
+\\
\hline
\\
\hline
\\
\hline
\\
\hline
+\\
\hline
\end{array}$$
\end{example}

The bijection between faces of $C_0$ and skew pipe dreams transforms geometric mitosis on faces of $C_0$ into the following combinatorial rule.
We use terminology of \cite[Section 3]{M}.
Given a skew pipe dream $D$ of size $n$, define
$$\start_i(D) = \min\{S_{n-i+1},S_{n+i-1}+1\},$$
where $S_j$ denotes the column index of the leftmost empty cell in row $j$, i.e.,
$$\start_i(D)= \min\{\min({j \ | \ (n-i+1, j) \notin D}), \min({j \ | \ (n+i-1, j) \notin D})+1\},$$
so the $(n\pm (i-1))$-th rows of $D$ are filled solidly with crosses in the region to the right and upward of cell $(\start_i(D)-1,n+i-1)$.
Let
$$\cJ^-(D) = \{\mbox{columns } j \mbox { strictly to the right of }
\start_i(D) \ |\   (n-i+2, j)  \mbox{ has no cross in } D\}.$$
and
$$\cJ^+(D) = \{\mbox{columns } j \mbox { strictly to the right of }
\start_i(D) \ |\   (n+i,j) \mbox{ has no cross in } D\}.$$
For $p\in\cJ^\pm(D)$, we now construct the {\em offspring} $D_p^\pm$ in two or three steps as follows.
\begin{enumerate}
\item If $p\in\cJ^-(D)$, to construct $D_p^-$ delete the cross at $(n-i+1, p)$ from $D$.
If $p\in \cJ^+(D)$, to construct $D_p^+$ delete the cross at $(n+i-1, p)$.

\item Take all crosses in row $n-i+1$ of $\cJ^-(D)$ and in row $n+i-1$ of $\cJ^+(D)$ that are to the right of column $p$, and move each one down to the empty box below it in row $n-i+2$ and in row $n+i$, respectively.

\item If $p\notin \cJ^-(D)\cap\cJ^+(D)$ or $i=1$, then we are done with both $D^-_p$ and $D^+_p$.
Otherwise, an additional step is required to construct $D^+_p$: move the cross at $(n-i+1,p)$
to the empty box below it in row $n-i+2$.
\end{enumerate}

\begin{defin}
The $i$-th mitosis operator sends a skew pipe dream $D$ to
$${\rm mitosis}_i(D) = \{D^-_p \ | \ p\in\cJ^-(D)\}\cup \{D^+_p \ | \ p\in\cJ^+(D)\}.$$
\end{defin}
Note that the $i$-th mitosis affects only rows $n\pm(i-1)$, $n-i+2$ and $n+i$, and ${\rm mitosis_i}(D)$ is empty if both $\cJ^+$ and $\cJ^-$ are empty.
It is easy to  check that under the above bijection between faces of $C_0$ and skew pipe dreams
we have
$${\rm mitosis}_i(D(\G))=M_i(\G).$$
In particular, for $n=2$ this combinatorial algorithm yields exactly the same tables as in Example \ref{e.Sp_4}.
\begin{example} Let $n=3$ and $i=2$.
$$D=\begin{array}{|c|}
\hline
\ \ \\
\hline
\end{array}
\begin{array}{|c|}
\hline
+\\
\hline
\\
\hline
+\\
\hline
\end{array}
\begin{array}{|c|}
\hline
+\\
\hline
+\\
\hline
\\
\hline
+\\
\hline
+\\
\hline
\end{array}
\stackrel{{\rm mitosis_2}}{\longrightarrow}
\left\{
D_2^+=\begin{array}{|c|}
\hline
\ \ \\
\hline
\end{array}
\begin{array}{|c|}
\hline
\\
\hline
+\\
\hline
\\
\hline
\end{array}
\begin{array}{|c|}
\hline
+\\
\hline
\\
\hline
+\\
\hline
+\\
\hline
+\\
\hline
\end{array} \ , \
D_2^-=\begin{array}{|c|}
\hline
\ \ \\
\hline
\end{array}
\begin{array}{|c|}
\hline
\\
\hline
\\
\hline
+\\
\hline
\end{array}
\begin{array}{|c|}
\hline
+\\
\hline
\\
\hline
+\\
\hline
+\\
\hline
+\\
\hline
\end{array} \ , \
D_3^-=\begin{array}{|c|}
\hline
\ \ \\
\hline
\end{array}
\begin{array}{|c|}
\hline
+\\
\hline
\\
\hline
+\\
\hline
\end{array}
\begin{array}{|c|}
\hline
+\\
\hline
\\
\hline
\\
\hline
+\\
\hline
+\\
\hline
\end{array}
\right\}
$$
In this example,  $\start_i(D)=1$, $\cJ^-(D)=$\{columns 2, 3\} and $\cJ^+(D)=$\{column 2\}.
\end{example}
\subsection{Open questions}
It is tempting to use combinatorial mitosis on skew pipe dreams to produce an explicit realization of generalized Newton--Okounkov polytopes for Schubert varieties on $Sp_{2n}/B$ by collections of faces of symplectic string polytopes.
While such a realization exists by general properties of string polytopes (see \cite[Section 5.5]{Mi} for more details) an explicit description is known only for $n=2$ (see \cite{I}).
However, the symplectic string polytopes associated with $\w_0=(s_n s_{n-1}\ldots s_2 s_1 s_2\ldots s_{n-1}s_n)\ldots (s_2s_1s_2)(s_1)$ are not parapolytopes with respect to decomposition
$\R^{n^2}=\R^{n}\oplus\R^{2n-2}\oplus\R^{2n-4}\oplus\ldots\oplus\R^2$ (already for $n=2$), so Corollary \ref{t.Schubert} can not be directly applied to them.

As we have seen in Section \ref{s.Sp_4}, the symplectic DDO polytope in the case of $Sp_4$ turned out to be a more suitable candidate for constructing explicit generalized Newton--Okounkov polytopes using Corollary \ref{t.Schubert}.
Symplectic DDO polytopes can also be constructed for $Sp_{2n}$ using reduced decomposition
$\w_0'=(s_n\ldots s_1)^n$ rather than $\w_0$
(note that for $n=2$ we have $\w_0=\w_0'$).
In an ongoing project with M. Padalko, we aim to describe these polytopes explicitly by inequalities, study combinatorics of their geometric mitosis and applications to the Schubert calculus on $Sp_{2n}$.

It is also interesting to check whether the Newton--Okounkov polytopes of flag varieties associated with the lowest term valuation 
$v^{\w_0}$ (see Section \ref{s.Sp_4}) are good candidates for applying geometric mitosis to the Schubert calculus.
Proposition \ref{p.NO} suggests that this might be the case.
Recall that theory of Newton--Okounkov polytopes can be used to construct toric degenerations
\cite{An}.
If a Newton--Okounkov polytope $P$ of the flag variety $X$ satisfies conditions of Corollary \ref{t.Schubert} and $X_P$ is the toric degeneration of $X$ associated with $P$
then it is natural to expect that collections of faces given by geometric mitosis yield  degenerations of Schubert varieties to (reduced) toric subvarieties of $X_P$.


\begin{thebibliography}{999}
\bibitem[A]{A} {\sc H. H. Andersen},
{\em Schubert varieties and Demazure's character formula},
Invent. Math., {\bf 79} (1985), no. 3, 611--618

\bibitem[An]{An} {\sc D. Anderson},
{\em Okounkov bodies and toric degenerations}, Math. Ann., {\bf 356} (2013), no. 3, 1183-1202

\bibitem[BZ]{BZ} {\sc A.Berenstein, A. Zelevinsky},
{\em Tensor product multiplicities, canonical bases and totally positive varieties},
Invent. Math. {\bf 143} (2001), no.1, 77--128

\bibitem[GK]{GK} {\sc M. Grossberg and Y. Karshon},
{\em Bott towers, complete integrability, and the extended character of representations},
Duke Math. J., {\bf 76} (1994), no. 1, 23--58.

\bibitem[BB]{BB} {\sc N. Bergeron, S. Billey},
{\em RC-graphs and Schubert polynomials}, Experimental Math. \textbf{2} (1993), no. 4, 257--269

\bibitem[I]{I} {\sc M. Ilyukhina}, {\em Schubert calculus and geometry of a string polytope for
the group $Sp_4$}, [In Russian], Diploma, Moscow State University, 2012

\bibitem[Ka11]{Ka2}{\sc K. Kaveh,} {\it Note on the Cohomology Ring of Spherical
Varieties and Volume Polynomial}, J. Lie Theory, {\bf 21 }(2011), no. 2, 263--283

\bibitem[Ka13]{Ka} {\sc K.Kaveh},
{\em Crystal basis and Newton--Okounkov bodies},
arXiv:1101.1687v3 [math.AG]

\bibitem[KaKh]{KK}{\sc K. Kaveh, A. Khovanskii,} {\it Newton convex bodies, semigroups of integral points, graded algebras and intersection theory}, Ann. of Math.(2) {\bf 176} 2, 925--978

\bibitem[K10]{K10} {\sc V. Kiritchenko,} {\em Gelfand--Zetlin polytopes and
geometry of flag varieties}, Int. Math. Res. Not. (2010), no. 13, 2512--2531

\bibitem[K13]{K}{\sc V.Kiritchenko}, {\em Divided difference operators on convex polytopes},
arXiv:1307.7234 [math.AG], to appear in Adv. Studies in Pure Math.

\bibitem[KST]{KST} {\sc V. Kiritchenko, E. Smirnov, V. Timorin},
{\em Schubert calculus and Gelfand--Zetlin polytopes},
Russian Math. Surveys, {\bf 67} (2012), no.4, 685--719

\bibitem[Ko]{KThesis} {\sc M. Kogan,}  {\it Schubert geometry of flag varieties and Gelfand--Cetlin theory,}
Ph.D. thesis, Massachusetts Institute of Technology, 2000

\bibitem[KoM]{KoM}
{\sc M. Kogan, E. Miller,}  {\it Toric degeneration of Schubert varieties and Gelfand-Tsetlin polytopes,}
Adv. Math. {\bf 193} (2005), no. 1, 1--17

\bibitem[KnM]{KnM}
{\sc A. Knutson and E. Miller,}
{\em Gr\"obner geometry of Schubert polynomials},
Ann. of Math. (2), {\bf 161} (2005), 1245--1318

\bibitem[L]{L} {\sc P. Littelmann},
{\em Cones, crystals and patterns},
Transform. Groups, {\bf 3} (1998), pp. 145--179

\bibitem[M]{M}{\sc E. Miller,} {\em Mitosis recursion for coefficients of Schubert
polynomials}, J. Comb. Theory A,
{\bf 103} (2003),  no. 2, 223--235

\bibitem[Mi]{Mi}{\sc J. Miller,} {\em Okounkov bodies of Borel orbit closures in
wonderful group compactifications}, PhD Thesis, Ohio State University, 2014

\end{thebibliography}
\end{document}